%
%
%

\documentclass[reqno,10pt]{amsart}
\usepackage{amsmath,amsfonts,amssymb,amscd}
\usepackage[utf8]{inputenc}
\usepackage[matrix,arrow,ps]{xy}

%
%

%
\setlength{\marginparwidth}{2cm}
\let\oldmarginpar\marginpar
\renewcommand\marginpar[1]{
\-\oldmarginpar[\raggedleft\footnotesize #1]%
{\raggedright\footnotesize #1}}
%

\theoremstyle{plain}
\newtheorem{theorem}[subsection]{Theorem}

\newtheorem{lemma}[subsection]{Lemma}
\newtheorem{proposition}[subsection]{Proposition}

\theoremstyle{definition}

\newtheorem{example}[subsection]{Example}
\theoremstyle{remark}
\newtheorem{remark}[subsection]{Remark}
\numberwithin{equation}{section}
%
\newcommand{\IA}{{\mathbb A}}
\newcommand{\IC}{{\mathbb C}}
\newcommand{\IF}{{\mathbb F}}

\newcommand{\IP}{{\mathbb P}}
\newcommand{\IQ}{{\mathbb Q}}
\newcommand{\IR}{{\mathbb R}}
\newcommand{\IZ}{{\mathbb Z}}
\newcommand{\ko}{\mathcal{O}}
\newcommand{\kv}{\mathcal{V}}
\newcommand{\kw}{\mathcal{W}}
\newcommand{\gothb}{\mathfrak{b}}
\newcommand{\gothf}{\mathfrak{f}}
\newcommand{\gothg}{\mathfrak{g}}
\newcommand{\gothh}{\mathfrak{h}}

\newcommand{\gothS}{\mathfrak{S}}
\newcommand{\kf}{\mathcal{F}}

\newcommand{\lra}{\longrightarrow}

\newcommand{\xra}{\xrightarrow}

\newcommand{\isom}{\cong}

%
\DeclareMathOperator{\Pic}{Pic}

\DeclareMathOperator{\Ker}{Ker}

\DeclareMathOperator{\id}{id}
\DeclareMathOperator{\ad}{ad}
\DeclareMathOperator{\res}{res}
\DeclareMathOperator{\LieSl}{SL}
\DeclareMathOperator{\Liesl}{sl}

\DeclareMathOperator{\LieGl}{Gl}

\DeclareMathOperator{\Lieso}{\mathfrak{so}}
\DeclareMathOperator{\LieSO}{SO}
\DeclareMathOperator{\Liesp}{\mathfrak{sp}}
\DeclareMathOperator{\LieSp}{Sp}

\DeclareMathOperator{\codim}{codim}
\DeclareMathOperator{\Sing}{Sing}	
\DeclareMathOperator{\Spec}{Spec}	
\DeclareMathOperator{\Ad}{Ad}

\DeclareMathOperator{\Amp}{Amp}
\DeclareMathOperator{\rk}{rk}
\DeclareMathOperator{\tr}{tr}

\DeclareMathOperator{\GIT}{/\!\!/}
\DeclareMathOperator{\pf}{pf}
\renewcommand{\SS}{S}
\newcommand{\So}{S_0}
\newcommand{\g}{\mathfrak{g}}
\newcommand{\h}{\mathfrak{h}}
\renewcommand{\b}{\mathfrak{b}}
\newcommand{\ie}{\emph{i.e.}\,}
\newcommand{\kO}{\mathcal{O}}

\newcommand{\NCone}{N}
\newcommand{\Wgp}{W}
\DeclareMathOperator{\tensor}{\otimes}
\DeclareMathOperator{\Blowup}{Bl}

\begin{document}
\title{Slodowy Slices and Universal Poisson Deformations}
\author{M.~Lehn, Y. Namikawa and Ch.~Sorger}

\address{Manfred Lehn\\
Fachbereich Physik, Mathematik u. Informatik\\
Johannes Gutenberg--Uni\-ver\-si\-tät Mainz\\
D-55099 Mainz, Germany}
\email{lehn@mathematik.uni-mainz.de}

\address{
Yoshinori Namikawa\\
Department of Mathematics\\
Faculty of Science\\
Kyoto University, Kitashirakawa-Oiwakecho, Kyoto, 606-8502, Japan}
\email{namikawa@math.kyoto-u.ac.jp}

\address{
Christoph Sorger\\
Laboratoire de Mathématiques Jean Leray (UMR 6629 du CNRS)\\
Université de Nantes\\
2, Rue de la Houssinière\\
BP 92208\\
F-44322 Nantes Cedex 03, France}
\email{christoph.sorger@univ-nantes.fr}

\subjclass[2000]{Primary 14B07; Secondary 17B45, 17B63}
\keywords{Nilpotent orbits, symplectic singularities, symplectic hypersurfaces, Poisson deformations}

\begin{abstract}
We classify the nilpotent orbits in a simple Lie algebra for
which the restriction of the adjoint quotient map to a Slodowy slice is the
universal Poisson deformation of its central fibre. This generalises
work of Brieskorn and Slodowy on subregular orbits. In particular, we
find in this way new singular symplectic hypersurfaces of dimension 4
and 6.
\end{abstract}

\maketitle

\markboth{Slodowy Slices}{Lehn, Namikawa, Sorger}


\phantom{m}\hfill{\sl To the memory of Professor Masaki Maruyama}\\

\section{Introduction}\label{section:Introduction}

The purpose of this paper is twofold: Firstly, we shall explain how to generalise the
classical theorem of Grothendieck-Brieskorn-Slodowy on slices to the subregular nilpotent
orbit in a simple Lie algebra to arbitrary nilpotent orbits. The main idea here is to put  
the problem in the framework of Poisson deformations. Secondly, we shall describe new 
examples of singular symplectic hypersurfaces. These can be seen as higher dimensional 
analogues of the Kleinian or DuVal ADE-surface singularities. They arise as slices to very 
special nilpotent orbits. 

(1) Let $\g$ be a simple complex Lie algebra and consider the characteristic or quotient map
$\varphi:\g\to\g\GIT G$ for the action of the adjoint group $G$ of $\g$. The nullfibre 
$N=\varphi^{-1}(0)$ consists of the nilpotent elements in $\g$ and is called the nilpotent cone. 
It is an irreducible variety and decomposes into finitely many orbits. The dense orbit is called 
the regular orbit and denoted by $\ko_{reg}$, its complement $N\setminus \ko_{reg}$ is again
irreducible, its dense orbit is called the subregular orbit and denoted by $\ko_{sub}$.

A slice to the adjoint orbit of a nilpotent element $x\in N$ can be constructed as 
follows: The Jacobson-Morozov theorem allows one to find elements $h,y\in\g$ such that $x$, $h$
and $y$ form an $\Liesl_2$-triplet, i.e.\
satisfy the standard commutator relations $[h,x]=2x$, $[h,y]=-2y$ and $[x,y]=h$. The affine space
\begin{equation}
\SS=x+\Ker(\ad y).
\end{equation}
is a special transversal slice (\cite{Slodowy-LectureNotes}, 7.4) to the orbit through $x$.
We follow the tradition to refer to this choice as a \emph{Slodowy slice}.

The following theorem was conjectured by Grothendieck and proved by Brieskorn in his address
\cite{Brieskorn} to the International Congress in Nice 1974. In its original form it is stated
for groups. The following version for Lie algebras is taken from Slodowy's notes 
\cite[1.5 Theorem 1]{Slodowy-Utrecht}.

\begin{theorem} {\em (Grothendieck-Brieskorn)} \label{th:Brieskorn} --- 
Let $\g$ be a simple Lie algebra of type ADE, let $N$ be its nilpotent cone and let $S$ be a slice
to the orbit of a subregular nilpotent element in $\g$. Then the germ $(S\cap N,x)$ is a Kleinian
surface singularity with the same Coxeter-Dynkin diagram as $\g$, and the restriction 
$\varphi|_S:(S,x)\to (\g\GIT G,0)$ of the characteristic map $\varphi$ is isomorphic to the semi-universal 
deformation of the surface singularity $(S\cap N,x)$. 
\end{theorem}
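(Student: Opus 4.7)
The plan is to split the statement into two parts: first identify the surface germ $(S\cap N,x)$ as a Kleinian ADE singularity, and then verify that $\varphi|_S$ realises its semi-universal deformation.

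For the first part, I start with dimension counts. The Jacobson--Morozov triple $(x,h,y)$ provides a contracting $\mathbb{C}^*$-action on $S$ with $x$ as unique fixed point, which localises all statements near $x$. Since the subregular orbit has codimension $\rk\g+2$ in $\g$, the slice $S$ has dimension $\rk\g+2$, so the fibres of $\varphi|_S$ are two-dimensional surfaces. Kostant's flatness of $\varphi$ together with transversality of $S$ to every adjoint orbit it meets yields flatness and reducedness of $\varphi|_S$. The central fibre $S\cap N$ is smooth on the dense open $S\cap\ko_{reg}$, and the $\mathbb{C}^*$-contraction combined with the codimensions of the smaller orbits forces $\Sing(S\cap N)=\{x\}$. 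Normality then follows because $S\cap N$ is a two-dimensional complete intersection with an isolated singularity.

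To identify the singularity type, I invoke Brieskorn's simultaneous resolution. Pulling $\varphi|_S$ back along the Weyl cover $\h\to\h/W=\g\GIT G$ and using the restriction of the Grothendieck--Springer resolution $\tilde\g\to\g$ to $S$, one obtains a family whose total space is smooth and whose fibres are all smooth except possibly the central one, where the exceptional set is a configuration of $(-2)$-curves. In the simply-laced setting the dual graph of this configuration can be matched with the Dynkin diagram of $\g$ itself via the Weyl group action on the cohomology of the resolution, so $(S\cap N,x)$ is the Kleinian singularity of the same ADE type.

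For the second part, both the base $\g\GIT G\cong\h/W$ and the base of the semi-universal deformation of a Kleinian singularity have dimension $\rk\g$, so it suffices to show that the Kodaira--Spencer map $T_0(\g\GIT G)\to T^1_{(S\cap N,x)}$ is an isomorphism. The contracting $\mathbb{C}^*$-action turns this into a comparison of graded vector spaces: on the source the weights are the fundamental degrees of the Weyl group, while on $T^1$ of an ADE singularity the weights are known to be the same sequence; injectivity is obtained by reading off nontrivial deformations from the simultaneous resolution constructed above, and then equality of dimensions upgrades injectivity to an isomorphism. The main obstacle is precisely this identification of $T^1$ together with the construction of the simultaneous resolution, since that is where the ADE hypothesis enters in an essential way: for non-simply-laced types the Weyl group of the slice singularity differs from that of $\g$, and the naive simultaneous resolution over $\h$ does not exist.
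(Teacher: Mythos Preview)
This theorem is not proved in the paper at all: it is quoted as the classical result of Brieskorn, in Slodowy's Lie-algebra formulation, and serves purely as motivation for the Poisson generalisation carried out in Theorems~\ref{th:mainTheoremSimplyLaced} and~\ref{th:mainTheoremNonSimplyLaced}. There is therefore no proof in the paper to compare your proposal against. Your outline is a fair summary of the original Brieskorn--Slodowy argument and touches the expected points: the $\IC^*$-contraction localising everything at $x$, Kostant's flatness and the transversality of $S$, the Grothendieck--Springer simultaneous resolution, reading the Dynkin diagram off the exceptional curve configuration, and the graded Kodaira--Spencer comparison on $T^1$.

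One genuine inaccuracy in your closing remarks: the simultaneous resolution of $\varphi|_S$ over $\h$ exists for \emph{every} simple $\g$, simply laced or not --- this is exactly Slodowy's diagram~\eqref{eq:SlodowySliceCD} in the paper, and the paper uses it throughout for all types. What fails in types $B_n$, $C_n$, $F_4$, $G_2$ is not the existence of that resolution but the dimension match: the slice surface singularity is then of type $A_{2n-1}$, $D_{n+1}$, $E_6$, $D_4$ respectively, so $\dim T^1_{(S\cap N,x)}$ strictly exceeds $\rk\g=\dim(\h/W)$ and the Kodaira--Spencer map cannot be surjective. The simultaneous resolution over $\h$ is in those cases only a pullback of the universal simultaneous resolution of the larger ADE singularity via a folding map.
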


An immediate obstacle to extending this theorem to other nilpotent orbits deeper down in the
orbit stratification of the nilpotent cone is the fact that the intersection $S_0=S\cap N$ is
no longer an isolated singularity so that there simply is no versal deformation theory. The
solution to this problem is to notice that $S_0$ carries a natural \emph{Poisson} structure,
that $\varphi|_S:S\to \g\GIT G$ can be considered as a deformation of Poisson varieties, and
that the space of infinitesimal Poisson deformation is again finite dimensional.

Recall that a Poisson structure on an $R$-algebra $A$ is an $R$-bilinear Lie bracket 
$\{-,-\}:A\times A\to A$ that satisfies $\{ab,c\}=a\{b,c\}+b\{a,c\}$ for all $a,b,c\in A$. 
A (relative) Poisson scheme is a morphism $f:X\to Y$ of finite type such that the structure
sheaf $\ko_X$ carries an $\ko_Y$-bilinear Poisson structure. 
A Poisson deformation of a Poisson variety $X$
over a pointed space $(T,t_0)$ is a flat morphism $p:{\mathcal X}\to T$ with a Poisson structure
on ${\mathcal X}$ relative over $T$ together with a Poisson isomorphism from $X$ to the fibre 
${\mathcal X}_{t_0}$. We will recall the basic properties of, and main results about, Poisson 
deformations relevant for this paper in section \ref{section:ReviewPoissonDeformations}.

Returning to the notations introduced above, it turns out that the restriction $\varphi_S:=
\varphi|_S:S\to \g\GIT G$ carries a natural relative Poisson structure for the Slodowy slice to
any nilpotent element. This Poisson structure is essentially induced by the Lie bracket on $\g$. 
It provides Poisson structures on each fibre of $\varphi_S$, and we may consider $\varphi_S$ as
a Poisson deformation of $S_0$ over the base $\g\GIT G$ (cf.\ the article of Gan and Ginzburg 
\cite{GanGinzburg} and section \ref{section:SlodowySlices}).

%
%

In order to state our first main theorem we need to introduce one more piece of notation: Let 
$\pi:\widetilde{\NCone}\rightarrow\NCone$ denote the Springer resolution of the nilpotent cone
(cf.\ section \ref{section:SimultaneousResolutions}). The so-called Springer fibre
$F_x:=\pi^{-1}(x)$ of $x\in N$ is the variety of all Borel subalgebras $\gothb\subset\g$ such
that $x\in\gothb$. Keeping the previous notations we can say:

\begin{theorem}\label{th:mainTheoremSimplyLaced} --- Let $x$ be a non-regular nilpotent element. 
Then $\varphi_\SS:\SS\to\g\GIT G$ is the formally universal Poisson deformation of $\So$ if and
only if the restriction map
$\rho_x:H^2(\widetilde{\NCone},\IQ)\rightarrow H^2(F_x,\IQ)$ is an isomorphism.
\end{theorem}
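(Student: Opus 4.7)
The plan is to invoke Namikawa's theory of Poisson deformations of affine conical symplectic varieties admitting a symplectic resolution, as recalled in section \ref{section:ReviewPoissonDeformations}, and to analyse $\varphi_\SS$ via the simultaneous resolution of $\SS$ induced by the Grothendieck-Springer map on $\g$.

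The first step is to verify that $\So=\SS\cap\NCone$ is a conical Poisson variety with the required good properties: the $\Liesl_2$-triple $(x,h,y)$ produces a contracting $\IC^*$-action on $\SS$ with unique fixed point $x$, and the Poisson bracket on $\SS$ described in section \ref{section:SlodowySlices} restricts to $\So$ with symplectic leaves $\SS\cap\ko$ for the adjoint nilpotent orbits $\ko\subset\NCone$. The Springer resolution restricts to a $\IC^*$-equivariant symplectic resolution $\widetilde{\So}:=\pi^{-1}(\So)\to\So$, and the Bia\l ynicki-Birula argument applied to the attractive fixed locus shows that $\widetilde{\So}$ retracts onto the Springer fibre $F_x$, yielding a canonical identification $H^2(\widetilde{\So},\IQ)\isom H^2(F_x,\IQ)$.

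Next, the Grothendieck-Springer simultaneous resolution restricts over $\SS$ to give a commutative square
\begin{equation*}
\begin{array}{ccc}
\widetilde{\SS} & \lra & \SS \\
\downarrow & & \downarrow \\
\h & \lra & \h/\Wgp
\end{array}
\end{equation*}
whose right vertical arrow is $\varphi_\SS$ and whose horizontal arrows are (simultaneous) resolutions. The family $\widetilde{\SS}\to\h$ is a Poisson deformation of $\widetilde{\So}$, obtained by pullback from $\widetilde{\g}\to\h$; its classifying map at the origin factors as
\begin{equation*}
\h\isom H^2(\widetilde{\NCone},\IC)\xra{\rho_x\otimes\IC}H^2(F_x,\IC)=H^2(\widetilde{\So},\IC),
\end{equation*}
where the first identification is the canonical one via the flag variety $\Borels$. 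By Namikawa's theorem, the formally universal Poisson deformation of $\widetilde{\So}$ has smooth formal base with tangent space $H^2(\widetilde{\So},\IC)$, so $\widetilde{\SS}\to\h$ is formally universal for $\widetilde{\So}$ exactly when $\rho_x\otimes\IC$ is an isomorphism, equivalently when $\rho_x$ is.

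Finally, the Namikawa correspondence between Poisson deformations of $\So$ and $\Wgp$-equivariant Poisson deformations of $\widetilde{\So}$ (recalled in section \ref{section:ReviewPoissonDeformations}) descends this criterion to $\varphi_\SS$: it is the formally universal Poisson deformation of $\So$ iff $\rho_x$ is an isomorphism. The main technical obstacle will be the precise identification of the classifying map with $\rho_x$ and the careful bookkeeping with the $\Wgp$-action when descending from $\widetilde{\So}$ to $\So$, both resting on the general framework set up in section \ref{section:ReviewPoissonDeformations}.
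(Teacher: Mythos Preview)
Your argument through the resolution side is essentially the paper's: identifying $H^2(\widetilde{\So},\IC)\isom H^2(F_x,\IC)$ via the $\IC^*$-retraction, reading the Kodaira--Spencer map of $\widetilde{\SS}\to\h$ as the composition $\h\isom H^2(\widetilde{\NCone},\IC)\to H^2(F_x,\IC)$, and concluding that $\widetilde{\SS}\to\h$ is formally universal for $\widetilde{\So}$ exactly when $\rho_x$ is an isomorphism. This reproduces Proposition~\ref{proposition:PoissonOverh}.

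The gap is in your final paragraph. There is no ``Namikawa correspondence between Poisson deformations of $\So$ and $\Wgp$-equivariant Poisson deformations of $\widetilde{\So}$'' in section~\ref{section:ReviewPoissonDeformations}. Theorem~\ref{th:NamikawaDeformation} only tells you that the formally universal base for $\So$ is $\h/H$ for \emph{some} finite group $H$ acting on $\h$, together with a Galois cover $\psi:\h\to\h/H$ compatible with $\pi_*$. The simultaneous resolution square gives the easy inclusion $\Wgp\subset H$, but nothing in the general theory forces $H\subset\Wgp$; a priori the true universal base could be a strictly smaller quotient of $\h/\Wgp$, making $\varphi_\SS$ non-universal even when $\rho_x$ is an isomorphism.

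The paper closes this by proving an independent geometric statement (Theorem~\ref{th:CrepantResolutions}): under condition~$(*)$, \emph{every} projective crepant resolution of $\SS\times_{\h/\Wgp}\h$ is isomorphic to some $\alpha_{B'}$ with $B'=wBw^{-1}$. Given $h\in H$, twisting the resolution $\SS_B$ by $h$ produces another crepant resolution, which must therefore be $\SS_{w(B)}$ for some $w\in\Wgp$; comparing on the central fibre and using universality of $\widetilde{\SS}\to\h$ forces $hw=\id$. This step rests on the analytic $\IQ$-factoriality of $\NCone$ (Proposition~\ref{prop:LocallyAnalyticallyQFactorial}) and the matching of nef cones (Propositions~\ref{prop:Restriction} and~\ref{prop:NefCones}), none of which are captured by ``careful bookkeeping with the $\Wgp$-action''. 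Without this input your descent argument does not go through.
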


In the theorem, `formally universal' means that for any Poisson deformation $S' \to T$
of $\So$ over a local Artinian base $t_0\in T$, there is unique map $f:T\to\g\GIT G$ such that 
$f(t_0)=0$ and such that $S'$ is isomorphic, as a Poisson deformation of $\So$, to the 
pullback of $\SS$ under $f$. It is a subtle problem which conditions should be imposed
on the total space of the Poisson deformation of an affine Poisson variety when the base space is not
local Artinian. For the moment being, `formal universality' is the best we can hope for.

We will see that for any non-regular $x\in\NCone$ the map $\rho_x$ is injective (Proposition 
\ref{prop:Injectivity}). The following theorem clarifies the questions for which orbits the map 
$\rho_x$ is indeed an isomorphism: 

\begin{theorem}\label{th:mainTheoremNonSimplyLaced}--- Let $x$ be a non-regular nilpotent element.
Then the restriction map $\rho_x$ is an isomorphism except in the following cases:
\begin{itemize}
\item[($B_n$)] the subregular orbit,
\item[($C_n$)] orbits of Jordan types $[n,n]$ and $[2n-2i,2i]$, for $1\leq i\leq \tfrac{n}{2}$,
\item[($G_2$)] the orbits of dimension 8 and 10,
\item[($F_4$)] the subregular orbit.
\end{itemize}
In particular, $\rho_x$ is an isomorphism for all non-regular nilpotent elements in a simply laced
Lie algebra.
\end{theorem}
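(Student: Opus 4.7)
By Proposition \ref{prop:Injectivity}, $\rho_x$ is injective, so showing it is an isomorphism reduces to the dimension count $b_2(F_x)=\rk(\g)$, where $b_2(F_x):=\dim_\IQ H^2(F_x,\IQ)$. The plan is to compute $b_2(F_x)$ for every non-regular nilpotent orbit and compare with $\rk(\g)$; the classification then follows by inspection.

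Two tools serve this computation. First, the Springer action of the Weyl group $W$ on $H^\ast(F_x,\IQ)$ is compatible with $\rho_x$, so the image of $\rho_x$ is a copy of the reflection representation $\h^*$ inside $H^2(F_x,\IQ)$; hence $\rho_x$ is an isomorphism precisely when $H^2(F_x,\IQ)\isom\h^*$ as $W$-modules. Second, explicit Betti numbers: for the classical Lie algebras Spaltenstein's cellular description of $F_x$ in terms of partial flags compatible with the Jordan decomposition of $x$ computes $b_2(F_x)$ directly, while for the exceptional Lie algebras $G_2,F_4,E_6,E_7,E_8$ the relevant Poincar\'e polynomials are tabulated by Beynon--Spaltenstein and Carter.

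Proceeding type by type: for the simply-laced types $A_n,D_n,E_6,E_7,E_8$, inspection of the Spaltenstein paving (types $A,D$) and of the tables (types $E$) yields $b_2(F_x)=\rk(\g)$ for every non-regular $x$, establishing the final sentence of the theorem. For type $B_n$, the Brieskorn--Slodowy picture (Theorem \ref{th:Brieskorn} suitably adapted to the non-simply-laced case) endows the slice at the subregular orbit with an $A_{2n-1}$ surface singularity, so $b_2(F_x)=2n-1>n$, while every deeper orbit yields $b_2(F_x)=n$. For type $C_n$, one verifies that the parity structure of the partition blocks forces extra irreducible components of $F_x$ exactly for the Jordan types $[n,n]$ and $[2n-2i,2i]$, every other admissible partition giving $b_2(F_x)=n$. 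For $G_2$ and $F_4$, the tables identify the listed orbits as the only ones with $b_2(F_x)>\rk(\g)$.

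The main obstacle is the uniform treatment of type $C_n$: one has an infinite family of partitions (those in which odd parts have even multiplicity) and must extract a combinatorial criterion singling out exactly $[n,n]$ and $[2n-2i,2i]$. The key input is the interplay between the Springer representation and the component group $A(x)$: the extra components correspond to non-trivial $A(x)$-invariant classes in $H^2(F_x)$ beyond the reflection representation. The exceptional-type verification is finite and amounts to cross-checking with published tables.
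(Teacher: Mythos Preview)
Your reduction via Proposition~\ref{prop:Injectivity} to the dimension equality $b_2(F_x)=\rk\g$ is correct and is exactly the paper's first step. The observation about the $W$-module structure of $H^2(F_x,\IQ)$ is valid, but since it reduces to the same dimension count it does not shorten the argument.

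The substantive divergence, and the real gap, lies in how the dimensions are obtained. The paper's key device is the monotonicity of Lemma~\ref{lemma:RestrictionIsInjection}: if $z\in\overline{G.v}$ then $b_2(F_z)\le b_2(F_v)$. This lets one compute $b_2$ only at a handful of critical orbits. In type $C_n$, for example, every two-part partition dominates $[n,n]$ and every partition with at least three parts is dominated by $[2n-2,1,1]$; the paper then computes $b_2$ for just these two orbits by an explicit induction on $n$ via the fibration $\psi:F_d\to\IP(\Ker x)$ and the Leray spectral sequence. In the simply-laced case the paper avoids any Betti-number computation at all: Slodowy's result gives that $\rho_p$ is an isomorphism for \emph{subregular} $p$, and the diagram in the proof of Proposition~\ref{prop:Injectivity} then forces $\rho_x$ to be an isomorphism for every deeper $x$.

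You do not invoke this monotonicity, so your plan commits you to computing $b_2(F_x)$ for every orbit and then defers that computation to external sources. For the exceptional types this is defensible (the paper itself uses De~Concini--Lusztig--Procesi for $F_4$), but for type $C_n$ your argument is essentially missing. The phrase ``the parity structure of the partition blocks forces extra irreducible components'' does not single out the list $[n,n]$, $[2n-2i,2i]$ without real work, and the allusion to $A(x)$-invariants is a red herring: counting irreducible components of $F_x$ gives the \emph{top} Betti number, not $b_2$, and for non-subregular $x$ the Springer fibre is higher-dimensional so the two differ. As written, the $C_n$ case (which you yourself flag as the main obstacle) is not proved; you would need either the paper's inductive fibration argument or a genuine Springer-theoretic identification of $H^2(F_x,\IQ)$ as a $W$-module for every symplectic partition.
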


Consider the special case of a subregular nilpotent element $x$. If $\g$ is simply laced, $\So$ is
a surface with a corresponding ADE-singularity at $x$ then Theorems \ref{th:mainTheoremSimplyLaced} and \ref{th:mainTheoremNonSimplyLaced}
provide a Poisson version of Brieskorn's Theorem \ref{th:Brieskorn}. Note that even in this case our 
theorem claims something new: $\varphi_\SS$ is a \emph{formally universal} Poisson deformation, whereas
in the sense of usual flat deformations it is only semi-universal, which means that the classifying maps 
to the base $(\g\GIT G,0)$ are unique only on the level of tangent spaces (\cite{Slodowy-LectureNotes}, Section 2.3). 
If $\g$ is not simply laced, it follows from 
Slodowy's results \cite{Slodowy-LectureNotes} that $\varphi_\SS:\SS\to\g\GIT G$ cannot be
the universal Poisson deformation for subregular orbits.
%

The condition that $\rho_x$ be an isomorphism also appears as a hypothesis in recent work of 
Braverman, Maulik and Okounkov \cite{Braverman-Maulik-Okounkov}. They use $\rho_x$ in order 
to describe explicitly a quantum multiplication operator on the quantum cohomology of the inverse
image of the Slodowy slice $\So$ in $\NCone$ under the Springer resolution. They show, with different 
arguments, that $\rho$ is an isomorphism for simply laced $\g$ (\cite{Braverman-Maulik-Okounkov}, 
Appendix). Thus our Theorem \ref{th:mainTheoremNonSimplyLaced} extends the range of cases where 
their Theorem 1.3 \cite{Braverman-Maulik-Okounkov} applies.

In the exceptional cases of Theorem \ref{th:mainTheoremNonSimplyLaced}, the Slodowy slices do not give
the universal Poisson deformations of $\So$ and it is natural to ask what the universal Poisson deformations
are. We will restrict ourselves to the Poisson deformation not of the affine variety $S_0$, but of the germ 
$(\So,x)$ in the complex analytic category. 

If $x$ is a subregular nilpotent element in a Lie algebra of type $B_n$, $C_n$, $F_4$ and $G_2$, then 
$(\So,x)$ is a surface singularity of type $A_{2n-1}$, $D_{n+1}$, $E_6$ and $D_4$, respectively. One can therefore
construct its universal Poisson deformation as a Slodowy slice in the corresponding simply laced Lie algebra. 

If $x$ belongs to the $8$-dimensional or `subsubregular' orbit in the Lie algebra of type $G_2$, then $(\So,x)$
turns out to be isomorphic to the analogous singularity $(\So',x')$ for a nilpotent element $x'\in \Liesp_{6}$ of 
Jordan-type $[4,1,1]$. As this orbit is not in the exceptional list of Theorem \ref{th:mainTheoremNonSimplyLaced} 
the associated Slodowy slice provides the universal Poisson deformation both of $(\So',x')$ and $(\So,x)$.   

In section \ref{section:DualPairs} we shall discuss an analogous phenomenon for the remaining cases, 
namely the orbits in $\Liesp_{2n}$ of Jordan types $[n,n]$ and $[2n-2i,2i]$. Kraft and Procesi \cite{K-P} already 
observed this phenomenon without the Poisson point of view. We will clarify that their method is closely related
to Weinstein's notion of a {\em dual pair} in Poisson geometry (\cite{W}).   

(2) One initial motivation for this article was the search for singular symplectic hypersurfaces.
A symplectic variety is a normal variety $X$ with a closed non-degenerate 2-form $\omega$ on its
regular part that extends as a regular 2-form to some (and then any) proper resolution $f:X'\to X$
of the singularities of $X$. Symplectic varieties carry natural Poisson structures: On the regular
part, the form $\omega:T_{X_{reg}}\to \Omega_{X_{reg}}$ can be inverted to yield a map
$\theta:\Omega_{X_{reg}}\to T_{X_{reg}}$. One checks that the bracket $\{f,g\}=\theta(df)(dg)$ on 
$\ko_{X_{reg}}$ satisfies the Jacobi-identity since $\omega$ is closed. By normality, this 
Poisson structure canonically extends to $X$. Many examples of singular symplectic varieties 
that we are aware of (like nilpotent orbit closures, finite group quotients, symplectic reductions) indicate
that symplectic singularities tend to require large embedding codimensions. In particular, 
singular symplectic hypersurfaces should be rare phenomena. Previously known were only the 
Klein-DuVal surface singularities in $\IC^3$. We found the following new examples.

Firstly, there is a series of four-dimensional symplectic hypersurfaces that appear as intersections
$S_0=N\cap S$ of the nilpotent cone $N$ with Slodowy slices $S$ to certain nilpotent orbits in 
$\Liesp_{2n}$. In simplified coordinates these can be written as follows:

\begin{example}\label{ex:hypersurfaces1} --- For each $n\geq 2$ the following polynomial defines
a four-di\-men\-sio\-nal symplectic hypersurface:
$$f=a^2x+2aby+b^2z+(xz-y^2)^n\in\IC[a,b,x,y,z].$$ 
\end{example}

Secondly, we have a single six-dimensional example that appears in a similar way in the exceptional
Lie algebra $\gothg_2$. The corresponding polynomial $f$ in seven variables can be best expressed
in the following way: Consider the standard action of the symmetric group $\gothS_3$ on $\IC^2$ and
the corresponding symplectic action on $\IC^2\oplus (\IC^2)^*$. The invariant ring of the latter action
is spanned by
seven elements, say $a,b,c$ of degree 2 and $p,q,r,s$ of degree 3, that are obtained by polarising
the second and the third elementary symmetric polynomial in 3 variables. The ideal of the quotient
variety $(\IC^2\oplus \IC^{2*})/\gothS_3\subset \IC^7$ is generated by the following five relations
among the invariants: 
$$
\begin{array}{l}
t_1=a(ac-b^2)+2(q^2-rp)\\
t_2=b(ac-b^2)+(rq-ps)\\
t_3=c(ac-b^2)+2(r^2-qs)
\end{array}\quad\quad
\begin{array}{l}
z_1=as-2br+cq\\z_2=ar-2bq+cp
\end{array}
$$
Keeping this notation we can say:
\begin{example}\label{ex:hypersurfaces2} --- The following polynomial defines a six-dimensional
symplectic hypersurface:
$$f=z_1^2a-2z_1z_2b+z_2^2c+2(t_2^2-t_1t_3)\in\IC[a,b,c,p,q,r,s].$$
\end{example}

In both cases the Poisson structure and in turn the symplectic structure can be recovered from a 
minimal resolution of the Jacobian ideal that is generated by the partial derivatives of $f$.
We will return to such issues in a later article.

\section{Poisson deformations}\label{section:ReviewPoissonDeformations}

For the convenience of the reader, we shall briefly review in this section some aspects of the
theory of Poisson deformations. For
details and further information we refer to the articles of Ginzburg and Kaledin \cite{GinzburgKaledin} 
and the second author \cite{Namikawa-Flops,Namikawa-PoissonDeformations1,Namikawa-PoissonDeformations2}.

Let $(X, \{\;, \;\})$ be an algebraic variety with a Poisson structure. We will usually denote the
pair again by the simple letter $X$ and suppress the bracket if no ambiguity can arise.
Let $A$ be a local Artinian $\IC$-algebra with residue filed $A/m_A = \IC$ and let $T=\Spec(A)$.
A Poisson deformation of $X$ over $A$ is a flat morphism $\mathcal{X}\to T$ with a relative Poisson 
structure $\{\;, \;\}_T$ on ${\mathcal X}/T$ and an isomorphism
$\phi:X\to\mathcal{X}\times_T\mathrm{Spec}(\IC)$ of Poisson varieties.

We define $\mathrm{PD}_X(A)$ to be the set of equivalence classes of such pairs $(\mathcal{X}/T,\phi)$ 
where $(\mathcal{X}, \phi)$ and $(\mathcal{X}', \phi')$ are defined to be equivalent if there is a
Poisson isomorphism $\psi: \mathcal{X} \cong \mathcal{X}'$ over $T$ with $\psi\circ\phi=\phi'$.
We obtain in this way the {\em Poisson deformation functor}:  
$$\mathrm{PD}_{X}: (\mathrm{Art})_{\IC} \to (\mathrm{Set})$$ 
from the category of local Artin $\IC$-algebras with residue field $\IC$ to the category of sets.  
Let $\IC[\epsilon]$ be the ring of dual numbers. The set $\mathrm{PD}_X(\IC[\epsilon])$ has the
structure of a $\IC$-vector space and is called the tangent space of $\mathrm{PD}_X$. A Poisson 
deformation of $X$ over $\mathrm{Spec}{\IC}[\epsilon]$ is called a {\em 1-st order} Poisson
deformation of $X$. 

As a particularly interesting case, consider an affine symplectic variety $X$ with a symplectic structure $\omega$ 
(cf. Introduction (2)). 
Assume further that there exists a symplectic projective resolution $\pi:Y\to X$, i.e.\ a projective 
resolution with the property that $\omega$ extends to a \emph{symplectic} form on $Y$. This is equivalent
to requiring that $\pi$ be crepant. (One can replace $Y$ by a $\IQ$-factorial terminalisation of $X$ if 
$X$ does not have a crepant resolution.) As explained in the introduction, both $X$ and $Y$ carry natural Poisson structures. 
Moreover, if $p:{\mathcal Y}\to T$ is a Poisson deformation of $Y$, one can show that 
$\mathcal X:=\Spec(p_*\ko_{\mathcal Y})\to T$ is a Poisson deformation of $X$. This defines a natural map of functors 
$$ \pi_*:\mathrm{PD}_Y \to \mathrm{PD}_X.$$ 
Finally, assume that $X$ has a $\IC^*$ action with positive weights such that $\omega$ becomes
homogeneous of positive weight. (In particular, $X$ is contractible.) In this case, the $\IC^*$
action uniquely extends to $Y$. 

Under these assumptions and with the introduced notation one has the following theorem that combines
results from \cite{Namikawa-PoissonDeformations1} and \cite{Namikawa-PoissonDeformations2}.

\begin{theorem} {\em (Namikawa)} \label{th:NamikawaDeformation} --- The Poisson deformation functors $\mathrm{PD}_X$ and
$\mathrm{PD}_Y$ are prorepresentable and unobstructed. More precisely, there is a $\IC^*$-equivariant
commutative diagram 
\begin{equation} 
\begin{CD} 
\mathcal{Y} @>>> \mathcal{X}  \\ 
@VVV @VVV \\ 
\mathbb{A}^d @>{\psi}>> \mathbb{A}^d    
\end{CD} 
\end{equation} 
where $\IA^d$ is the affine space of dimension $d=\dim H^2(Y,\IC)$ isomorphic to $\mathrm{PD}_X(\IC[\epsilon])$
resp.\ $\mathrm{PD}_Y(\IC[\epsilon])$, such that $\mathcal{X} \to \mathbb{A}^d$ and $\mathcal{Y} \to \mathbb{A}^d$ 
are formally universal Poisson deformations of $X$ and $Y$, resp., at $0\in\mathbb{A}^d$,
and the map $\psi$ is compatible with the functor map $\pi_*:\mathrm{PD}_Y \to \mathrm{PD}_X$. 
Moreover, $\psi$ is a finite Galois cover with $\psi (0) = 0$. 
\end{theorem}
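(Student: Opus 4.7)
The plan is to prove the statement in three stages: first, prorepresentability and unobstructedness of $\mathrm{PD}_X$ and $\mathrm{PD}_Y$ as abstract deformation functors; second, algebraisation of the resulting formal universal deformations to honest families over $\IA^d$ using the $\IC^*$-action; third, identification of the comparison map $\psi$ as a finite Galois cover via the birational geometry of symplectic resolutions.

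For the first stage, I would verify Schlessinger's criteria for both functors (the homogeneity of fibre products of Poisson deformations being a formal consequence of flatness and the Leibniz rule) and compute tangent spaces via Poisson cohomology. On the smooth symplectic $Y$, the Poisson complex is quasi-isomorphic through $\omega$ to the truncated de Rham complex, yielding $\mathrm{PD}_Y(\IC[\epsilon])\cong H^2(Y,\IC)$. On $X$ one combines $\pi_*\ko_Y=\ko_X$, $R^i\pi_*\ko_Y=0$ for $i>0$ (standard for symplectic resolutions), and a local-to-global spectral sequence whose higher terms are trivialised by the positive $\IC^*$-weights to obtain $\mathrm{PD}_X(\IC[\epsilon])\cong H^2(Y,\IC)$ as well. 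Unobstructedness I would establish by a $T^1$-lifting argument of Kawamata--Ran type: the $\IC^*$-action contracts $Y$ onto the projective fibre $\pi^{-1}(0)$, and the degeneration of the relevant Hodge-to-de Rham spectral sequence produces lifts at each order. This yields formal universal deformations over $\IC[[t_1,\dots,t_d]]$ with $d=\dim H^2(Y,\IC)$, together with a formal map $\hat\psi$ between them induced by pushforward.

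For the second stage, the positive-weight $\IC^*$-action descends to a positive grading on the pro-representing ring, forcing it to arise as the formal completion of a polynomial ring. A graded variant of Artin algebraisation then upgrades both formal deformations to $\IC^*$-equivariant families $\mathcal Y\to\IA^d$ and $\mathcal X\to\IA^d$, and the algebraic $\psi$ is obtained from $\hat\psi$ in the same way. Since $\pi_*$ is an isomorphism on tangent spaces, $\psi$ is étale at $0$.

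The main obstacle, and the stage I expect to be the hardest, is the finite Galois property of $\psi$. My plan is to show that any two $\IQ$-factorial terminalisations of the fibres of $\mathcal X\to\IA^d$ are related by a chain of codimension-two (Mukai) flops, and that the resulting monodromy representation on $H^2(Y,\IC)$ factors through a finite reflection group $W$, the Namikawa--Weyl group. Each $w\in W$ produces a birational model of $\mathcal Y$ with the same Stein factorisation $\mathcal X$, so $W$ acts on $\mathcal Y$ by Poisson automorphisms covering automorphisms of the base; conversely, every deck transformation of $\psi$ arises this way by the universality of $\mathcal Y$. Hence $\psi$ identifies with the quotient map $\IA^d\to\IA^d/W$, which is finite Galois with $\psi(0)=0$. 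Finiteness of $\psi$ (as opposed to mere quasi-finiteness) is moreover forced by the positive-weight equivariance: a $\IC^*$-equivariant quasi-finite morphism between affine spaces with strictly positive weights is automatically finite, because the fibre over the origin must be a single point.
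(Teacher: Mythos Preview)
The paper does not prove this theorem at all: it is stated as a result of the second author, with the proof delegated entirely to the references \cite{Namikawa-PoissonDeformations1} and \cite{Namikawa-PoissonDeformations2} (see the sentence immediately preceding the theorem). So there is no ``paper's own proof'' to compare against; in the article the theorem functions purely as a black box used later in Section~\ref{section:UniversalPoissonDeformations}.

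That said, your outline is a fair summary of the strategy actually carried out in the cited papers, with one genuine slip. You write that ``since $\pi_*$ is an isomorphism on tangent spaces, $\psi$ is \'etale at $0$''. This is false in general and inconsistent with your own third stage: if $\psi$ is the quotient $\IA^d\to\IA^d/W$ by a nontrivial reflection group $W$, then the differential $d\psi_0$ is \emph{not} an isomorphism (already for $t\mapsto t^2$ on $\IA^1$ it vanishes). The theorem only asserts that $\mathrm{PD}_X(\IC[\epsilon])$ and $\mathrm{PD}_Y(\IC[\epsilon])$ are both of dimension $d$, not that the map $\pi_*$ between them is bijective. In the actual arguments of \cite{Namikawa-PoissonDeformations1,Namikawa-PoissonDeformations2} the finiteness of $\psi$ is obtained not from \'etaleness but, roughly as you say at the end, from the $\IC^*$-equivariance together with a direct analysis of the generic behaviour of $\psi$ (the generic fibres of $\mathcal X\to\IA^d$ are smooth affine symplectic and hence Poisson-rigid, forcing $\psi$ to be generically finite, and equivariance upgrades this to finiteness). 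So your overall architecture is right, but you should delete the \'etale-at-$0$ claim and route the finiteness argument entirely through the equivariance/rigidity reasoning you sketch afterwards.
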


\section{Slodowy slices}\label{section:SlodowySlices}

Let $\g$ be a simple complex Lie algebra and $G$ its adjoint group. Let $x\in\g$ be a nilpotent
element. According to the Jacobson-Morozov theorem, there is a nilpotent element $y\in\g$ and a
semi-simple element $h\in\g$ such that $[h,x]=2x$, $[h,y]=-2y$ and $[x,y]=h$. The resulting triple
$\{x,h,y\}$, called a Jacobson-Morozov triple for $x$, defines a Lie algebra homomorphism
$\Liesl_2\to\g$ that is non-zero and hence an embedding if $x\neq 0$.
Slodowy (\cite{Slodowy-LectureNotes}, 7.4) showed that the affine space
$$\SS=x+\Ker(\ad y)$$
is a transverse slice to the conjugacy class of $x$. One obtains a natural $\IC^*$-action on $\g$
as follows: The $\ad h$-action yields a decomposition $\g=\oplus_{d\in\IZ}\g(d)$ into 
weight spaces $\g(d)=\{z\in\g\ |\ [h,z]= d\,z\}$. Define $\rho_t(z)=t^{2-d}z$ for $z\in \g(d)$
and extend linearly. This action fixes the nilpotent element $x$ and stabilises the slice $S$. 
%
%
Nilpotent orbits that intersect $\SS$ do so transversely. This is clear at $x$, hence in some open 
neighbourhood of $x$ in $\SS$, then everywhere as $\IC^*$ acts with positive weights.

Each fibre of $\varphi:\g\to\g\GIT G$ contains a unique conjugacy class of a semi-simple element.
In this sense, $\varphi$ maps an element $z\in\g$ to the class $[z_s]$ of its semi-simple part. In 
particular, its central fibre is the nilpotent cone $\NCone\subset\g$. Choose once and for all a Cartan 
subalgebra $\h\subset\g$ containing $h$. Let $\Wgp$ be the associated Weyl group. According to Chevalley, 
the inclusion $\h\subset\g$ induces an isomorphism $\IC[\g]^G\to\IC[\h]^\Wgp$, hence an identification
$\h/\Wgp\isom\g\GIT G$. The morphism $\varphi:\g\to\h/\Wgp$ is called the \emph{adjoint quotient}.

According to classic results of Kostant \cite{Kostant} extended by Slodowy (\cite{Slodowy-LectureNotes}, 
5.2) and Premet (\cite{Premet}, 5), its restriction $\varphi_\SS:\SS\to\h/\Wgp$ to the Slodowy slice is
faithfully flat (hence surjective) with irreducible, normal complete intersection fibres of dimension
$\dim\SS-\rk\g$ whose smooth points are exactly the regular elements of $\g$ contained in the fibre. In 
particular, the central fibre $\So=\SS\cap\NCone$ is an irreducible normal complete intersection
whose regular points are the regular nilpotent elements in $\SS$.

The Lie bracket on $\g$ extends uniquely to a Poisson structure on the symmetric algebra $S\g=\IC[\g^*]$. 
By construction, the invariant subalgebra $(S\g)^G$ Poisson commutes with all elements in $S\g$. 
Identifying $\g$ with $\g^*$ via the Killing form defines a Poisson structure on $\g$ relative to 
$\h/\Wgp$.

According to Gan and Ginzburg  the Slodowy slice $\SS$ inherits a $\IC^*$-invariant Poisson structure of 
weight $-2$ via a Hamiltonian reduction (\cite{GanGinzburg}, 3.2). This description also shows that on 
regular points of the fibres, this Poisson structure corresponds to the Kostant-Kirillov-Souriau 
symplectic form (see also \cite{Yamada}).

Thus, $\varphi_\SS:\SS\to\h/\Wgp$ is a Poisson deformation of the central fibre $\So$.

\section{$\IQ$-factoriality of the nilpotent cone}

The nilpotent cone decomposes into the disjoint union of finitely many nilpotent orbits. The
dense orbit of maximal dimension corresponding to regular nilpotent elements in $\g$ is called
the regular orbit $\kO_{reg}$; its complement $\NCone\setminus\kO_{reg}$ also contains a dense
orbit, corresponding to subregular nilpotent elements in $\g$, the so-called subregular orbit
$\kO_{sub}$. If $\SS$ is a Slodowy slice to an element in $\kO_{sub}$, then $\So=\SS\cap\NCone$
is a surface singularity of ADE-type, and as such a quotient $\IC^2/\Gamma(\g)$ for a finite
subgroup $\Gamma(\g)\subset\LieSl_2$.

\begin{proposition}\label{prop:LocallyAnalyticallyQFactorial} ---
Let $m=|\Gamma(\g)|$. The nilpotent cone is analytically locally $m$-factorial.
\end{proposition}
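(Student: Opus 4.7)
The plan is to reduce the assertion, via the slice argument and Slodowy's $\IC^*$-action, to showing $m\cdot\mathrm{Cl}(\So)=0$ for every Slodowy slice $\So$; then to compute $\mathrm{Cl}(\So)$ via the restricted Springer resolution as $H^2$ of a Springer fibre modulo exceptional divisor classes; and finally to identify the result with a subquotient of $P/Q$ whose exponent divides $m=|\Gamma(\g)|$ by the McKay correspondence.

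For the reduction, the morphism $G\times\SS\to\g$, $(g,s)\mapsto\Ad(g)s$, is smooth at $(e,x)$ and restricts to a smooth map $G\times\So\to\NCone$; this gives an analytic-local isomorphism of $\NCone$ near $x$ with a product $\So\times V$ for a smooth factor $V$, so that $\mathrm{Cl}(\hat{\ko}_{\NCone,x})\isom\mathrm{Cl}(\hat{\ko}_{\So,x})$. Slodowy's $\IC^*$-action on $\SS$ fixes $x$ and has only positive weights, so $\SS$ and its closed subvariety $\So$ contract onto $x$; hence $\mathrm{Cl}(\hat{\ko}_{\So,x})=\mathrm{Cl}(\So)$, and the problem becomes local-global on $\So$.

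Next, the restricted resolution $\pi_\So\colon\tilde\So=\pi^{-1}(\So)\to\So$ is projective, crepant and symplectic. Since $\So$ is affine with rational singularities, Leray and $R^i\pi_{\So*}\ko=0$ yield $H^i(\tilde\So,\ko)=0$ for $i\geq 1$; combined with the $\IC^*$-retraction of $\tilde\So$ onto the Springer fibre $F_x=\pi^{-1}(x)$ and the Bialynicki--Birula vanishing $H^{\mathrm{odd}}(F_x,\IZ)=0$, the exponential sequence gives $\Pic(\tilde\So)\isom H^2(F_x,\IZ)$. The standard divisor-class formula for rational singularities then gives
$$\mathrm{Cl}(\So)\isom \Pic(\tilde\So)/\langle[E_i]\rangle,$$
where the $E_i$ are the codimension-one components of the exceptional locus of $\pi_\So$; since every non-subregular orbit has codimension $\geq 4$ in $\NCone$, these $E_i$ are exactly the irreducible components of $\pi^{-1}(\So\cap\kO_{sub})$. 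Globally on $\widetilde\NCone$, the divisor $\pi^{-1}(\kO_{sub})$ decomposes (for simply-laced $\g$) into $\rk\g$ components whose classes in $\Pic(\widetilde\NCone)=P$ are, up to sign, the simple roots and thus span the root lattice $Q\subset P$, and the restriction map $\rho_x\colon P=H^2(\widetilde\NCone,\IZ)\to H^2(F_x,\IZ)$ carries them onto the classes $[E_i]$.

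Using injectivity of $\rho_x$ (Proposition~\ref{prop:Injectivity}) and the fact that in the simply-laced case $\rho_x$ is an isomorphism (by the Brieskorn--Slodowy description of Springer fibres together with the Braverman--Maulik--Okounkov observation cited in the Introduction), one concludes $\mathrm{Cl}(\So)\isom P/Q$. The McKay correspondence identifies $P/Q$ with $\Gamma(\g)^{\mathrm{ab}}$, whose order divides $|\Gamma(\g)|=m$; therefore $m\cdot\mathrm{Cl}(\So)=0$. The main obstacle lies in the non-simply-laced setting, where $\rho_x$ may fail to be surjective and $\pi^{-1}(\kO_{sub})$ has fewer than $\rk\g$ global components, with extra multiplicities reflecting the component group of the centraliser of $x$. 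These cases I would handle by passing to the simply-laced cover used in Slodowy's treatment of the subregular slice and running a Galois descent under the outer-automorphism action, keeping track of how the exceptional classes, the root lattice and $\Gamma(\g)$ transform under folding so that $\mathrm{Cl}(\So)$ is still identified as a subquotient of $\Gamma(\g)^{\mathrm{ab}}$; this bookkeeping is the delicate part of the argument.
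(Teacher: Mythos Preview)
Your approach is quite different from the paper's and has an appealing structural flavour, but there are genuine gaps that prevent it from going through as written.

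The crucial step is the identification $\mathrm{Cl}(\So)\cong P/Q$, and this rests on two unproved assertions. First, you need $\rho_x\colon P=H^2(\widetilde\NCone,\IZ)\to H^2(F_x,\IZ)$ to be an \emph{integral} isomorphism, but Proposition~\ref{prop:Injectivity} and the Braverman--Maulik--Okounkov result are over $\IQ$. Even if both lattices are free of the same rank and $\rho_x\otimes\IQ$ is an isomorphism, $\rho_x$ can have nontrivial finite cokernel, and then $H^2(F_x,\IZ)/\rho_x(Q)$ has exponent strictly larger than $|P/Q|$. Second, you assert that the subgroup generated by the local exceptional classes $[E_i]$ equals $\rho_x(Q)$. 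One inclusion is clear (the restriction of a global exceptional component is a nonnegative combination of the $E_i$), but a global component $E^{(j)}\cap\tilde\So$ may well be reducible for deep orbits, and then the individual $[E_i]$ need not lie in $\rho_x(Q)$. Without both facts you cannot bound the exponent of $\mathrm{Cl}(\So)$ by $|P/Q|$. You also need to be careful that the analytic local class group agrees with the algebraic $\mathrm{Cl}(\So)$; this is plausible via the contracting $\IC^*$-action but is another point that needs an argument.

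The non-simply-laced case is where the real difficulty lies, and there your proposal is only a sketch: ``Galois descent under folding'' hides exactly the interaction between the component group of the centraliser, the splitting behaviour of exceptional divisors, and the structure of $\Gamma(\g)$ that would have to be worked out.

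By contrast, the paper's proof never computes $\mathrm{Cl}$ and never mentions $\rho_x$ or $P/Q$. It is a short induction on orbit codimension: the codimension-two (subregular) stratum has transverse $\IC^2/\Gamma(\g)$ singularities, where $m$-factoriality is immediate. For a deeper stratum $B$ of codimension $\geq 4$, one takes a small contractible Stein neighbourhood $U$ and uses that $\NCone$ is a complete intersection, so the Hamm--Lê theorem on rectified homological depth gives $H_3(U,U\setminus B;\IZ)=0$; combined with Cohen--Macaulayness this forces $\Pic^{an}(U\setminus B)=0$, and a line bundle representing $mD$ on $U\setminus B$ (which exists by induction) extends across $B$. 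This argument is uniform in the Dynkin type and avoids all the lattice bookkeeping.
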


Recall that a normal variety $Z$ is locally \emph{$m$-factorial} if for any local Weil divisor
$D$ of $Z$, the divisor $mD$ is Cartier.

\begin{proof} Recall the following general facts. Let $X$ be a reduced complex analytic space
and consider a Whitney stratification on $X$. Denote by $X_i$ the union of the strata of
dimension $\leq i$. The \emph{rectified homological depth} of $X$ is said to be $\geq n$ if for
any point $x\in X_i\setminus X_{i-1}$, there is a fundamental system $(U_\alpha)$ of neighbourhoods
of $x$ in $X$ such that, for any $\alpha$ we have $H_k(U_\alpha,U_\alpha\setminus X_i;\IZ)=0$
for $k<n-i$. The vanishing of homology still holds if we replace $U_\alpha$ by a sufficiently small 
contractible open neighbourhood $U$ of $x$. The basic result we will use is that if $X$ is locally
a non empty complete intersection, then the rectified homological depth of $X$ is equal to the
dimension of $X$ (see \cite{HammLeDT}, Corollary 3.2.2 and use the relative Hurewicz theorem).

Recall that $\NCone$ is a complete intersection. We will apply the above for the stratification
given by the orbits. Our proof is by induction on the codimension $d(O)$ of the orbit inside 
$\NCone$. Note that $d(O)$ is even. When $d(O)=0$, every point $z\in O$ is actually a smooth 
point of $\NCone$, so the claim holds trivially. When $d(O)=2$, then $\NCone$ has ADE singularities
along $O$, hence $\NCone$ is $m$-factorial at $x\in O$ as a quotient $\IC^2/\Gamma(\g)$.
In the general case, consider a Weil divisor $D$ in an open contractible Stein neighbourhood $U$ of 
$x\in\NCone$. Let $B$ be the intersection of the orbit of $x$ with $U$. Note that $B\subset\Sing(U)$
but that $B$ itself is smooth of codimension $\geq 4$ in $U$.

By induction, for $w\in\Sing(U)\setminus B$, we know that $mD$ is Cartier in an open neighbourhood of 
$w\in\NCone$. Hence we may assume that $mD$ is Cartier in $U\setminus B$, \ie that there is a line
bundle $L$ on $U\setminus B$ and a section $s$ in $L$ such that $mD$ is the zero locus of $s$.
Consider the exact sequence of the pair $(U,U\setminus B)$
\begin{equation}\label{eq:CohomologyOfThePair}
H^2(U,\IZ)\to H^2(U\setminus B,\IZ)
\to H^3(U,U\setminus B;\IZ).
\end{equation}

Remark that $H^2(U,\IZ)=0$ as $U$ is Stein and contractible. Moreover we have $H_3(U,U\setminus B;\IZ)=0$ 
as the rectified homological depth of $\NCone$ is equal to the dimension of $\NCone$ and $B$ is of 
codimension $\geq 4$. By the universal coefficient theorem, it follows that $H^3(U,U\setminus B;\IZ)=0$ 
because $H_3(U,U\setminus B;\IZ)=0$.

From the exact sequence \eqref{eq:CohomologyOfThePair}, we see that  $H^2(U\setminus B,\IZ)=0$.
Now since $U$ is Cohen-Macaulay and $\codim_{U}B\geq 4$, we know that $H^i(U\setminus B,\kO)=0$ 
for $i\leq 2$. Using the exponential sequence, we see that $H^1(U\setminus B,\kO^*)\isom 
H^2(U\setminus B,\IZ)=0$ so that we have $\Pic^{an}(U\setminus B)=0$. It follows that $L$ is trivial
on $U\setminus B$. The section $s$ extends by normality. Hence $mD$ is Cartier on $U$.
\end{proof}

In the algebraic case, the result follows from parafactoriality of complete intersections.

\begin{remark} --- The nilpotent cone is algebraically locally $m$-factorial.
\end{remark}

\begin{proof} The assertion is clear on  the open subset $U=\kO_{reg}\cup\kO_{sub}$. Let $V\supset U$
be an open subset of $\NCone$ such that $V$ is locally $m$-factorial.

Let $\eta$ be a generic point of $\NCone\setminus V$ and let $D$ be a Weil divisor on 
$T=\Spec(\kO_{N,\eta})$. By the choice of $V$, there is a line bundle $L$ on $T\setminus\{\eta\}$
and a section $s$ in $L$ such that $mD$ is the zero locus of $s$.

Now the local ring $\kO_{N,\eta}$ is a complete intersection of dimension $\geq 4$ and hence 
parafactorial according to Grothendieck (\cite{Grothendieck}, Théorème 3.13 (ii)). This implies 
that $\Pic(T\setminus\{\eta\})=0$. In particular, $L$ is trivial and the section $s$ extends to a
section in $\Gamma(T,\kO_T)$ as $T$ is normal. This shows that $mD$ is Cartier on $T$.

The assertion follows by noetherian induction.
\end{proof}

\section{Simultaneous resolutions} \label{section:SimultaneousResolutions}

Let $\g$ be a simple complex Lie algebra and let $\h\subset\g$ be a Cartan subalgebra
as in section \ref{section:SlodowySlices}. Let $G$ be the adjoint group and $T\subset G$
the maximal torus corresponding to $\h$. For any Borel subgroup $B$ with
$T\subset B\subset G$ and with Lie algebra $\b$ consider the commutative diagram
\begin{equation}\label{eq:AdjointQuotientCD}
\begin{CD}
G\times^{B}\b @>{\pi_B}>> \g \\
@V\varphi_B VV @VV\varphi V \\
\h @>\pi >> \h/\Wgp
\end{CD}
\end{equation}
where $\pi$ is the quotient map and $\pi_B$ and $\varphi_B$
are defined as follows. For a class $[g,b]\in G\times^{B}\b$ let
$b=b_0+b_1$ be the decomposition corresponding to
$\b=\h\oplus[\b,\b]$.
Then  $\varphi_B([g,b])=b_0$
and $\pi_B([g,b])=\Ad(g)b$.
According to Grothendieck, the above diagram is a simultaneous resolution for $\varphi$. The central 
fibre of $\varphi_B$ is the cotangent bundle $T^*(G/B)$ of the flag variety $G/B$. The map
\begin{equation}
\pi_{B,0}: T^*(G/B)\to\NCone
\end{equation}
is a crepant resolution. The exceptional locus $E$ is a normal crossing divisor whose components 
correspond bijectively to the simple roots defined by the choice of $\b$ (\cite{Slodowy-Utrecht}, 3.2).

Let $z\in\NCone$ be a non-regular nilpotent element. Choose an open neighbourhood $V\subset T^*(G/B)$
of the fibre $F_z=\pi_{B,0}^{-1}(z)$ such that the inclusion $F_z\subset V$ is a homotopy equivalence.
As $\pi_{B,0}: T^*(G/B)\to\NCone$ is proper we may choose a contractible open Stein neighbourhood 
$U\subset\NCone$ of $z$ such that $\widetilde{U}:=\pi_{B,0}^{-1}(U)\subset V$.

\begin{lemma}\label{lemma:Pican} --- The map $\Pic^{an}(\widetilde{U})\xra{c_1}
H^2(\widetilde{U};\IZ)$ is an isomorphism. Moreover, $\Pic^{an}(\widetilde{U})_\IQ$
has a basis given by the line bundles ${\mathcal{O}}(E_i)$, where the $E_i$ are the
components of $\widetilde{U}\cap E$.
\end{lemma}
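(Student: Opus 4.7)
My strategy is to address the two assertions in turn: first the isomorphism $\Pic^{an}(\widetilde U) \cong H^2(\widetilde U, \IZ)$, then the basis description.

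For the first, I would invoke the exponential sequence
$$H^1(\widetilde U, \ko) \to \Pic^{an}(\widetilde U) \xra{c_1} H^2(\widetilde U, \IZ) \to H^2(\widetilde U, \ko),$$
reducing the claim to the vanishing $H^i(\widetilde U, \ko_{\widetilde U})=0$ for $i=1,2$. I would derive this from the Leray spectral sequence for $\pi := \pi_{B,0}|_{\widetilde U}\colon \widetilde U \to U$: since $U$ is Stein, Cartan's Theorem B gives $H^p(U,\kf)=0$ for every coherent $\kf$ and $p>0$, so the spectral sequence collapses to $H^q(\widetilde U,\ko_{\widetilde U}) = H^0(U, R^q\pi_*\ko_{\widetilde U})$. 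Because $\NCone$ has symplectic, and hence rational, singularities, the resolution $\pi_{B,0}$ satisfies $R^q\pi_{B,0,*}\ko = 0$ for $q>0$; restricting to $U$ yields the required vanishing.

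For the basis claim I would separate spanning and linear independence. Spanning is where Proposition \ref{prop:LocallyAnalyticallyQFactorial} enters: $\pi_{B,0}$ is an isomorphism over $\kO_{reg}$, so $\widetilde U \setminus E$ is identified with $U \cap \kO_{reg}$. Given a line bundle $L$ on $\widetilde U$, I would restrict it to $\widetilde U \setminus E$, transport to $U \cap \kO_{reg}$, and extend by Hartogs/reflexive hull to a Weil divisor class $D$ on $U$. By $m$-factoriality $mD$ is Cartier on $U$; since $U$ is contractible and Stein, the corresponding line bundle is trivial. Therefore $L^{\otimes m}$ restricts trivially to $\widetilde U \setminus E$ and so differs from the trivial bundle only by a $\IZ$-combination of the components of $\widetilde U \cap E$, exhibiting $\{[\ko(E_i)]\}$ as a $\IQ$-spanning set of $\Pic^{an}(\widetilde U)_\IQ$.

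For linear independence I would exploit the homotopy equivalence $\widetilde U \simeq F_z$ together with the classical surjection $H^2(G/B, \IQ) = H^2(T^*(G/B),\IQ) \epimorph H^2(F_z,\IQ)$ from Springer theory, whose kernel is spanned by the classes of those simple-root exceptional divisors in $T^*(G/B)$ which are disjoint from $F_z$. Since these simple-root divisors form a basis of $H^2(T^*(G/B),\IQ)$, the images of those meeting $F_z$ form a basis of $H^2(F_z,\IQ) \cong H^2(\widetilde U,\IQ)$; for $\widetilde U$ chosen small enough that each such intersection is connected, these correspond bijectively to the components $E_i$ of $\widetilde U \cap E$. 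Combined with the spanning statement, this forces the $[\ko(E_i)]$ to form a basis. I expect the main obstacle to be precisely this last Springer-theoretic step: pinning down the kernel of the restriction $H^2(T^*(G/B),\IQ) \to H^2(F_z,\IQ)$ is where the argument exploits the specific geometry of the Springer fibre rather than being formal.
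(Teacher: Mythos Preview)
Your first two steps are correct and match the paper: the exponential sequence plus Leray for the rational resolution $\widetilde U\to U$ over a Stein base gives the isomorphism $\Pic^{an}(\widetilde U)\cong H^2(\widetilde U,\IZ)$, and the spanning argument via analytic $\IQ$-factoriality of $U$ and triviality of $\Pic^{an}(U)$ is essentially the paper's proof.

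The linear independence argument, however, has a genuine gap. You invoke a ``classical surjection'' $H^2(T^*(G/B),\IQ)\twoheadrightarrow H^2(F_z,\IQ)$, but no such surjection exists in general: the paper in fact proves the restriction is \emph{injective} (Proposition~\ref{prop:Injectivity}), and the entire content of Theorem~\ref{th:mainTheoremNonSimplyLaced} is the classification of those $z$ for which it is an isomorphism. When condition $(*)$ fails, e.g.\ for the subregular orbit in $B_n$, one has $\dim H^2(F_z,\IQ)=2n-1>n=\rk\g=\dim H^2(T^*(G/B),\IQ)$, so the map cannot be surjective. Correspondingly, your assumption that the components of $\widetilde U\cap E$ biject with the global simple-root divisors meeting $F_z$ is false: a single global component $E_i$ (of which there are only $\rk\g$) can meet $\widetilde U$ in several irreducible pieces, no matter how small $\widetilde U$ is chosen, and this is precisely how one gets more than $\rk\g$ local components. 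So the step you flagged as the ``main obstacle'' is not merely delicate but wrong as stated.

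The paper avoids global Springer theory entirely and argues locally: each local component $E_i\subset\widetilde U$ surjects onto some component of $\Sing(U)$ (by semismallness of $\pi_{B,0}$), so one can choose a generic surface $A\subset U$ meeting every such component, with smooth preimage $\widetilde A=\pi_{B,0}^{-1}(A)$. The exceptional curves $C_j$ of the surface resolution $\widetilde A\to A$ have negative definite intersection matrix by Grauert's criterion, hence their classes are linearly independent in $H^2(\widetilde A,\IQ)$; restricting the $\ko(E_i)$ to $\widetilde A$ then shows the $\ko(E_i)$ themselves are linearly independent.
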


\begin{proof} We know that $\widetilde{U}\to U$ is a resolution of rational singularities. As $U$ is 
Stein, it follows from Leray's spectral sequence $H^p(U,R^q\pi_{_{B,0}*}\ko_{\widetilde{U}})
\Rightarrow H^{p+q}(\widetilde{U},\ko_{\widetilde{U}})$ that the cohomology groups 
$H^i(\widetilde{U},\ko)$ vanish for $i=1,2$. Hence the exponential  cohomology sequence
\begin{equation}
H^1(\widetilde{U},\ko)\to H^1(\widetilde{U},\ko^*)\to
H^2(\widetilde{U},\IZ)\to H^2(\widetilde{U},\ko)
\end{equation}
yields the first assertion.

Let $\widetilde{D}\subset\widetilde{U}$ be a prime divisor distinct from any $E_i$.
Then $D=\pi_{B,0}(\widetilde{D})$ is a Weil divisor on $U$. As $U$ is analytically
$\IQ$-factorial by Proposition \ref{prop:LocallyAnalyticallyQFactorial}, there is an integer $m$ 
such that $mD$ is Cartier. As $U$ is Stein and contractible, $\Pic^{an}(U)=0$ by a result of Grauert,
hence there is an analytic function $s\in\ko(U)$ such that $mD=\{s=0\}$. It follows that  
$\{\pi_{B,0}^*(s)=0\}=m\widetilde{D}+\sum m_i E_i$ for suitable coefficients. This shows that
$\Pic^{an}(\widetilde{U})_\IQ$ is generated by the line bundles ${\mathcal{O}}(E_i)$.

It remains to see that these line bundles are linearly independent. Note that every $E_i$ maps 
surjectively to a component of the singular locus of $U$ as $\pi_{B,0}: T^*(G/B)\to\NCone$ is 
semismall. Choose
a surface $A\subset U$ that intersects every component of the singular locus non trivially and in
such a way that $\widetilde{A}=\pi_{B,0}^{-1}(A)$ is smooth (Bertini). Let $C_j$ be the components
of the exceptional fibres of $\widetilde{A}\to A$. By a result of Grauert, the intersection matrix
of the curves $C_j$ is negative definite. In particular, the classes of the line bundles
$\ko_{\widetilde{A}}(C_j)$ are linearly independent. A fortiori, the line bundles $\ko(E_i)$ are
linearly independent.
\end{proof}

Keeping the notation we have furthermore:

\begin{lemma}\label{lemma:RestrictionIsInjection} --- Let $z^\prime\in U$ be a non-regular nilpotent 
element. Then the composition
\begin{equation}
H^2(F_z,\IQ)\xra{\res^{-1}} H^2(V,\IQ) \xra{\res} H^2(F_{z^\prime},\IQ)
\end{equation}
is injective. In particular, if $v\in\NCone$ is an arbitrary non-regular nilpotent element such that
$z\in\overline{G.v}$, then $\dim H^2(F_z,\IQ)\leq \dim H^2(F_{v},\IQ)$.
\end{lemma}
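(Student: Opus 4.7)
My plan is to combine Lemma \ref{lemma:Pican}, applied simultaneously at $z$ and at $z'$, with the semismallness of the Springer resolution, in order to recast the injectivity of the composition as a simple combinatorial statement about a $(0,1)$-matrix. First, without loss of generality I take $V = \widetilde{U}$ (shrinking if necessary, using that $F_z \hookrightarrow \widetilde{U}$ is a homotopy equivalence by contractibility of $U$ and properness of $\pi_{B,0}$). Next I choose a contractible open Stein neighbourhood $U' \subset U$ of $z'$ so that $\widetilde{U}' := \pi_{B,0}^{-1}(U')$ deformation retracts onto $F_{z'}$. The composition in the statement then becomes the natural restriction
\[
\rho \colon H^2(\widetilde{U}, \IQ) \longrightarrow H^2(\widetilde{U}', \IQ)
\]
induced by $\widetilde{U}' \subset \widetilde{U}$.

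Applying Lemma \ref{lemma:Pican} to both $U$ and $U'$ supplies $\IQ$-bases $\{[\ko(E_i)]\}$ and $\{[\ko(E'_j)]\}$ of source and target, where the $E_i$ and $E'_j$ are the irreducible components of $E \cap \widetilde{U}$ and $E \cap \widetilde{U}'$, respectively. Each $E'_j$, being an irreducible analytic divisor of the same dimension as the $E_i$'s, is contained in a unique $E_i$. Hence $\rho$ sends $[\ko(E_i)]$ to the sum of those $[\ko(E'_j)]$ for which $E'_j \subset E_i$, giving a $(0,1)$-matrix whose columns each contain exactly one nonzero entry. Such a matrix has full row rank if and only if every row is nonzero, i.e.\ every component $E_i$ of $E \cap \widetilde{U}$ meets $F_{z'}$.

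The heart of the proof is therefore to show that $E_i \cap F_{z'} \neq \emptyset$ for every $i$, which I expect to be the main step. For this I would use semismallness of $\pi_{B,0}$: as noted in the proof of Lemma \ref{lemma:Pican}, every $E_i$ surjects onto a component of $\Sing(U)$. For a simple Lie algebra, $\Sing(\NCone) = \overline{\kO_{sub}}$ is irreducible, so $\Sing(U)$ is analytically irreducible and $\pi_{B,0}(E_i) = \Sing(U)$ for every $i$. Since $z'$ is non-regular it lies in $\Sing(U)$, hence in $\pi_{B,0}(E_i)$, and so $E_i \cap F_{z'} = E_i \cap \pi_{B,0}^{-1}(z') \neq \emptyset$, which proves injectivity of $\rho$. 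The ``in particular'' statement then follows: given any non-regular $v \in \NCone$ with $z \in \overline{G \cdot v}$, we may choose $g \in G$ with $\Ad(g) v \in U$ (possible because $U$ is an open neighbourhood of $z$) and apply the injectivity just established with $z' = \Ad(g) v$, using that $F_v \cong F_{\Ad(g)v}$.
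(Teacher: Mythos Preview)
Your core step is exactly the paper's: both arguments establish that the restriction
\[
b:H^2(\widetilde{U},\IQ)\longrightarrow H^2(\widetilde{U}',\IQ)
\]
is injective by invoking Lemma~\ref{lemma:Pican} and the fact that every exceptional component $E_i$ meets $\widetilde{U}'$ because it surjects onto the singular locus of $U$, which contains $z'$. Your $(0,1)$-matrix description just makes explicit what the paper compresses into one sentence.

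The gap is in your reduction. You assert that $F_z\hookrightarrow\widetilde{U}$ and $F_{z'}\hookrightarrow\widetilde{U}'$ are homotopy equivalences ``by contractibility of $U$ and properness of $\pi_{B,0}$''. Properness over a contractible base does \emph{not} by itself force the total space to deformation retract onto a fibre; one needs extra input (a conical/$\IC^*$-structure, or Thom--Mather stratified triviality for a carefully chosen $U$). The paper does not make this claim. Instead it keeps the auxiliary neighbourhoods $V\supset\widetilde{U}$ and $V'\supset\widetilde{U}'$ for which $F_z\subset V$ and $F_{z'}\subset V'$ are homotopy equivalences \emph{by construction}, and runs a small diagram chase: from $F_z\subset\widetilde U\subset V$ one only gets that $i:H^2(V)\to H^2(\widetilde U)$ is injective and $p:H^2(\widetilde U)\to H^2(F_z)$ is surjective (their composite is an isomorphism), and likewise for the primed versions. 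Then the commutativity $i'a=bi$ together with injectivity of $b$ and $i$ forces $a:H^2(V)\to H^2(V')$ to be injective, and the desired composition is $(p'i')\circ a$ with $p'i'$ an isomorphism. So your argument for $b$ plugs directly into the paper's framework; what fails is your shortcut identifying $H^2(F_z)$ with $H^2(\widetilde U)$.

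One smaller point: global irreducibility of $\overline{\kO_{sub}}$ does not imply that $\Sing(U)=\overline{\kO_{sub}}\cap U$ is \emph{analytically} irreducible --- an irreducible variety can have several local analytic branches. The paper phrases the needed fact simply as ``$E_i$ maps surjectively to the singular locus of $U$, hence hits $z'$''.
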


\begin{proof} As before, we may choose an open neighbourhood $V^\prime\subset\widetilde{U}$ of the 
fibre $F_{z^\prime}=\pi_{B,0}^{-1}(z^\prime)$ such that the inclusion $F_{z^\prime}\subset V^\prime$
is a homotopy equivalence and an open contractible Stein neighbourhood $U^\prime\subset U$ of
$z^\prime$ such that $\widetilde{U}^\prime:=\pi_{B,0}^{-1}(U^\prime)\subset V^\prime$.
We get the following diagram of restriction maps
\begin{equation}
\xymatrix{
 H^2(V,\IQ)\ar^a[r]\ar_i[d]& H^2(V^\prime,\IQ)\ar^{i^\prime}[d]\\
 H^2(\widetilde{U},\IQ)\ar^b[r]\ar_p[d]& H^2(\widetilde{U}^\prime,\IQ)\ar^{p^\prime}[d]\\
 H^2(F_z,\IQ)& H^2(F_{z^\prime},\IQ)
}
\end{equation}
Since $pi$ and $p^\prime i^\prime$ are isomorphisms, $i$ and $i^\prime$
are injective and $p$ and $p^\prime$ are surjective. Next,
$b$ is injective: to see this it suffices by Lemma \ref{lemma:Pican}
that every component $E_i$ of $\widetilde{U}\cap E$ intersects
with $\widetilde{U}^\prime$ non-trivially. But this is clear as
$E_i$ maps surjectively to the singular locus of $U$, hence hits $z^\prime$.
Since $i$ and $b$ are injective, it now follows that $a$ is injective.
Finally, $p^\prime i^\prime a$ is injective as claimed.
The assertion on the dimension follows of course from the fact that
the orbit of $v$ intersects $U$ and
that points in the same nilpotent orbits have isomorphic fibres.
\end{proof}

\begin{proposition}\label{prop:Injectivity} --- Let $z\in\NCone$ be a non-regular nilpotent element. 
The restriction map $H^2(T^*(G/B),\IQ)\to H^2(F_z,\IQ)$ is injective. In particular, it is an 
isomorphism if and only if $\dim H^2(F_z,\IQ) =\rk\g$.
\end{proposition}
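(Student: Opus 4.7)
The strategy is to factor the restriction map $\rho_z$ through the cohomology of the neighbourhood $\widetilde U = \pi_{B,0}^{-1}(U)$ of $F_z$ considered in Lemma \ref{lemma:Pican}, and to exploit the explicit basis of $\Pic^{an}(\widetilde U)_\IQ$ supplied there. Let $E_1,\dots,E_r$ denote the $r = \rk\g$ global irreducible components of the exceptional divisor $E = \pi_{B,0}^{-1}(\overline{\kO_{sub}})$, whose classes form a basis of $H^2(T^*(G/B),\IQ) \cong H^2(G/B,\IQ)$. The first goal is to show that the restriction $H^2(T^*(G/B),\IQ) \to H^2(\widetilde U,\IQ)$ is injective.

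By Lemma \ref{lemma:Pican}, a basis of $H^2(\widetilde U,\IQ) \cong \Pic^{an}(\widetilde U)_\IQ$ is indexed by the irreducible components $D_\alpha$ of $\widetilde U\cap E$, and restriction carries $[E_i]$ to $\sum_{D_\alpha\subset E_i}[D_\alpha]$. Two elementary geometric facts show these $r$ sums are linearly independent. First, each $D_\alpha$ is contained in a unique $E_i$: if $D_\alpha \subset E_i\cap E_{i'}$ with $i\neq i'$, then $D_\alpha$ would lie in a subvariety of codimension $\geq 2$ in $T^*(G/B)$, contradicting its own codimension $1$. Second, each $E_i$ contains at least one $D_\alpha$: indeed $E_i$ surjects onto the irreducible $\overline{\kO_{sub}} = \Sing(\NCone)$, and $z\in\overline{\kO_{sub}}$ by non-regularity, so $E_i\cap F_z$, hence $E_i\cap\widetilde U$, is non-empty. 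Consequently the supports $\{D_\alpha : D_\alpha\subset E_i\}$, $i=1,\dots,r$, are non-empty and pairwise disjoint, yielding the linear independence.

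To conclude, I factor through the chain $F_z\subset\widetilde U\subset V$: the composite $H^2(T^*(G/B),\IQ)\to H^2(V,\IQ)\to H^2(\widetilde U,\IQ)$ is the injection established above, so already $H^2(T^*(G/B),\IQ)\to H^2(V,\IQ)$ is injective. Since $F_z\hookrightarrow V$ is a homotopy equivalence, $H^2(V,\IQ) \cong H^2(F_z,\IQ)$, and under this identification the map is $\rho_z$. The ``in particular'' statement then follows at once from $\dim H^2(T^*(G/B),\IQ) = \rk\g$. The main technical ingredient is the classical structural fact that every $E_i$ surjects onto $\overline{\kO_{sub}}$; note that a single $E_i$ can still contribute several $D_\alpha$ in $\widetilde U$ (as happens for example in type $G_2$), and this is precisely the mechanism by which $\dim H^2(F_z,\IQ)$ can exceed $\rk\g$ without obstructing the injectivity of $\rho_z$.
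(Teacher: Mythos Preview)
Your argument is correct, and it takes a genuinely different route from the paper's proof. The paper argues via a subregular element: it picks a subregular $p\in U$, invokes Slodowy's result (\cite{Slodowy-Utrecht}, 4.5 Corollary) that $a_p:H^2(T^*(G/B),\IQ)\to H^2(F_p,\IQ)$ is injective, and then observes that $a_p$ factors through $H^2(V,\IQ)\cong H^2(F_z,\IQ)$, forcing $a_z$ to be injective. Your proof instead exploits Lemma~\ref{lemma:Pican} directly: you track the global divisor classes $[E_i]$ to the explicit basis $\{[D_\alpha]\}$ of $H^2(\widetilde U,\IQ)$ and verify injectivity combinatorially from the fact that the index sets $\{D_\alpha\subset E_i\}$ are non-empty and pairwise disjoint. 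Your approach is more self-contained within the paper (it does not call on Slodowy's computation) and, as you note in your final remark, makes transparent the mechanism by which $\dim H^2(F_z,\IQ)$ can exceed $\rk\g$. One small presentational point: you assert at the outset that the $[E_i]$ form a basis of $H^2(T^*(G/B),\IQ)$, but in fact you do not need this as an input---your own argument shows that their images in $H^2(\widetilde U,\IQ)$ are linearly independent, hence the $[E_i]$ themselves are linearly independent, and since there are $\rk\g=\dim H^2(T^*(G/B),\IQ)$ of them they form a basis a posteriori.
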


\begin{proof} Choose a subregular nilpotent element $p\in U$. Then we have the
following diagram of restriction maps:
\begin{equation}\label{eq:Dreiecksdiagram}
\begin{array}{c}
\xymatrix{
    H^2(F_z,\IQ)& H^2(V,\IQ)\ar_{\res_z}[l]\ar^{\res_p}[r] & H^2(F_p, \IQ)\\
    & H^2(T^*(G/B),\IQ)\ar^{a_z}[lu]\ar^{a}[u]\ar_{a_p}[ru]
}
\end{array}
\end{equation}
The map $\res_z$ is an isomorphism by the choice of $V$. 
By (\cite{Slodowy-Utrecht}, 4.5 Corollary), the map $a_p$ is injective, 
hence $a_z$ is injective as well.
\end{proof}

Let $\SS$ be a Slodowy slice to an element $x\in\NCone$ as in section \ref{section:SlodowySlices}.
Define $\SS_B := \pi_B^{-1}(\SS)$. The induced commutative diagram
\begin{equation}\label{eq:SlodowySliceCD}
\begin{CD}
\SS_B @>\pi_{_{BS}}>> \SS \\
@V\varphi_{_{BS}} VV @VV\varphi_S V \\
\h @>\pi>> \h/\Wgp
\end{CD}
\end{equation}
is a simultaneous resolution of $\SS\to\h/\Wgp$ according to Slodowy (\cite{Slodowy-LectureNotes}, 5.3).
It yields a projective crepant resolution $\alpha_B:\SS_B\to\SS\times_{\h/\Wgp}\h$.

\begin{theorem}\label{th:CrepantResolutions} --- Assume that $x$ satisfies the following condition:
\begin{equation*}
(*)\quad\quad\dim H^2(F_x,\IQ)=\rk\g.
\end{equation*}
Then every projective crepant resolution $\alpha^\prime:\SS^\prime\to\SS\times_{\h/\Wgp}\h$
is isomorphic to $\alpha_B$ for some Borel subgroup $B$ with $T\subset B$.
\end{theorem}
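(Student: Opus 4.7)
The plan is to exploit Namikawa's theorem on formally universal Poisson deformations together with the classification of projective crepant resolutions of a symplectic variety via chambers in the movable cone.

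First, note that $\bar{\SS} := \SS \times_{\h/\Wgp} \h$ inherits a relative Poisson structure from $\SS\to\h/\Wgp$ by base change, so that both $\bar{\SS} \to \h$ and $\SS_B \to \h$ become $\IC^*$-equivariant Poisson deformations of their central fibres $\So$ and $\tilde{\So}:=\pi_{_{BS}}^{-1}(\So)$ respectively. Hypothesis $(*)$ combined with Proposition \ref{prop:Injectivity} gives $\dim H^2(\tilde{\So},\IQ) = \rk\g = \dim\h =: d$. For an arbitrary projective crepant resolution $\alpha':\SS'\to\bar{\SS}$, the composition $\SS'\to\bar{\SS}\to\h$ realises $\SS'\to\h$ as a Poisson deformation of its central fibre $\tilde{\So}'$, itself a projective crepant resolution of $\So$. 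Since any two projective crepant resolutions of a symplectic variety are connected by Mukai flops (Wierzba-Wisniewski, Kaledin, Namikawa), and Mukai flops preserve $H^2$, we also obtain $\dim H^2(\tilde{\So}',\IQ)=d$.

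Next, I would apply Theorem \ref{th:NamikawaDeformation} to the pairs $(\So,\tilde{\So})$ and $(\So,\tilde{\So}')$: both $\SS_B\to\h$ and $\SS'\to\h$ arise as pull-backs of the respective universal Poisson deformations over $\IA^d$ along $\IC^*$-equivariant classifying maps $\h\to\IA^d$. A Kodaira-Spencer computation, using Chevalley's isomorphism $\IC[\g]^G\cong\IC[\h]^\Wgp$ together with the Brieskorn-Slodowy theorem applied to a subregular slice transverse to $\SS$, identifies the classifying map attached to $\SS_B$ with the quotient $\pi:\h\to\h/\Wgp\cong\IA^d$. The classifying map for $\SS'$ is then forced to be $\pi$ composed with an element of the Namikawa Weyl group of $\So$, so that the isomorphism class of $\SS'$ over $\bar{\SS}$ is determined by the isomorphism class of the crepant resolution $\tilde{\So}'\to\So$ together with this combinatorial datum.

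It remains to enumerate the projective crepant resolutions of $\So$. By Lemma \ref{lemma:Pican} and hypothesis $(*)$, the relative Picard group of $\tilde{\So}/\So$ has rank $\rk\g$ and is freely generated, up to torsion, by the exceptional components, one per simple root. The projective crepant resolutions of $\So$ correspond bijectively to the chambers of the movable cone inside $N^1(\tilde{\So}/\So)_\IR$, on which the Namikawa Weyl group of $\So$ acts simply transitively. The main obstacle, and the technical heart of the argument, is to show that under $(*)$ this Namikawa Weyl group coincides with the classical Weyl group $\Wgp$, so that the $|\Wgp|$ resolutions $T^*(G/B')|_{\So}$, as $B'$ ranges over Borels containing $T$, exhaust all chambers. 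Granted this identification, each $\SS'$ corresponds to a unique Borel $B'\supset T$ and $\alpha'\cong\alpha_{B'}$ over $\bar{\SS}$.
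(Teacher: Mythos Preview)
Your proposal takes a different route from the paper, but it contains a circularity that is not easy to dissolve.

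The paper's argument stays entirely within birational geometry. First (Proposition \ref{prop:Restriction}), condition $(*)$ forces the restriction $H^2(G\times^B\b,\IR)\to H^2(\SS_B,\IR)$ to be an isomorphism; second (Proposition \ref{prop:NefCones}), this isomorphism carries the relative nef cone $\overline{\Amp}(\alpha_B)$ onto $\overline{\Amp}(\alpha_{BS})$. Since the cones $\overline{\Amp}(\alpha_{w(B)})$ for $w\in\Wgp$ are exactly the Weyl chambers and cover $\h_\IR^*\cong H^2(G\times^B\b,\IR)$, the cones $\overline{\Amp}(\alpha_{w(B)S})$ cover $H^2(\SS_B,\IR)$. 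Any further projective crepant resolution $\SS'\to\bar\SS$ contributes an open ample cone in this same $H^2$; it must meet the interior of some $\overline{\Amp}(\alpha_{w(B)S})$, and a standard argument (two projective crepant resolutions of an affine Gorenstein variety sharing an ample class are isomorphic) gives $\SS'\cong\SS_{w(B)}$. No Poisson deformation theory and no Namikawa Weyl group enter.

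Your approach, by contrast, routes through the Namikawa Weyl group of $\So$, and the gap is that you defer exactly the point the paper treats as primary: you acknowledge that identifying this group with $\Wgp$ is ``the technical heart of the argument'' but offer no mechanism for it. In the paper's logical order this identification is \emph{deduced from} Theorem \ref{th:CrepantResolutions}, not used to prove it --- see the second proposition of Section \ref{section:UniversalPoissonDeformations}, whose proof explicitly invokes Theorem \ref{th:CrepantResolutions} to manufacture the element $w\in\Wgp$ matching a given $h\in H$. So unless you can compute the Namikawa Weyl group of $\So$ independently under $(*)$, the argument is circular. There is also a smaller confusion in your Kodaira--Spencer step: the classifying map for $\SS_B\to\h$ as a Poisson deformation of $\SS_{B,0}$ is an \emph{isomorphism} $\h\to\h$ (this is Proposition \ref{proposition:PoissonOverh}), not the quotient $\pi:\h\to\h/\Wgp$; the latter classifies $\SS\to\h/\Wgp$ as a deformation of $\So$. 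Conflating the two blurs the passage from resolutions of $\So$ to resolutions of $\bar\SS$ that your last paragraph needs.
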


We will prove the theorem in several steps.

\begin{proposition}\label{prop:Restriction} --- Assume condition $(*)$ holds.
Then the natural restriction map
\begin{equation}\label{eq:RestrictionToSliceOnH2Level}
H^2(G\times^{B}\b,\IQ)\lra  H^2(\SS_B,\IQ)
\end{equation}
is an isomorphism.
\end{proposition}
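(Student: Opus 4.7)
The strategy is to identify the restriction map \eqref{eq:RestrictionToSliceOnH2Level} with the map $\rho_x: H^2(T^*(G/B),\IQ)\to H^2(F_x,\IQ)$ appearing in Theorem \ref{th:mainTheoremSimplyLaced}, and then apply Proposition \ref{prop:Injectivity} together with hypothesis $(*)$.

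First I would show that the central fibre inclusion $T^*(G/B)\hookrightarrow G\times^B\b$ is a homotopy equivalence. Indeed, $G\times^B\b\to G/B$ is the associated vector bundle with fibre $\b$, and scaling fibres provides a deformation retraction onto the zero section $G/B$. The subvariety $T^*(G/B)\isom G\times^B\n$, with $\n=[\b,\b]$, is itself a subbundle containing the same zero section and retracts onto $G/B$; the two retractions are compatible, so the inclusion is a homotopy equivalence. In particular $H^2(G\times^B\b,\IQ)\isom H^2(T^*(G/B),\IQ)$, and this space has dimension $\rk\g$, either by the Schubert cell decomposition of $G/B$ or by the basis of line bundles described in Lemma \ref{lemma:Pican}.

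Next I would prove that the inclusion $F_x\hookrightarrow\SS_B$ is also a homotopy equivalence. The action $\rho_t$ on $\SS=x+\Ker(\ad y)$ has strictly positive weights and contracts $\SS$ onto the fixed point $x$. Since $\pi_{_{BS}}:\SS_B\to\SS$ is projective and $\IC^*$-equivariant, for every $p\in\SS_B$ the orbit map $t\mapsto\rho_t\cdot p$ extends uniquely over $0\in\IA^1$ by the valuative criterion of properness to a morphism $\IA^1\to\SS_B$ landing in $F_x=\pi_{_{BS}}^{-1}(x)$ at the origin. Assembling these extensions yields an $\IA^1$-action on $\SS_B$ which restricts to the $\IC^*$-action and sends $\{0\}\times\SS_B$ onto $F_x$; this provides the desired retraction. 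In particular $H^2(\SS_B,\IQ)\isom H^2(F_x,\IQ)$.

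These homotopy equivalences are compatible with the horizontal inclusions, producing a commutative square
\begin{equation*}
\begin{CD}
H^2(G\times^B\b,\IQ) @>>> H^2(\SS_B,\IQ)\\
@VVV @VVV\\
H^2(T^*(G/B),\IQ) @>{\rho_x}>> H^2(F_x,\IQ)
\end{CD}
\end{equation*}
in which both vertical maps are isomorphisms and the bottom row is $\rho_x$. By Proposition \ref{prop:Injectivity} this map is injective, and under condition $(*)$ its target has dimension $\rk\g$, which by the previous paragraph also equals the dimension of its source. Hence $\rho_x$, and therefore \eqref{eq:RestrictionToSliceOnH2Level}, is an isomorphism. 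The main technical point is producing the retraction of $\SS_B$ onto $F_x$ as a genuine (continuous or algebraic) homotopy: while the limit $\lim_{t\to 0}\rho_t\cdot p$ exists at each point by properness, assembling these into a family requires the valuative-criterion argument sketched above. The rest is essentially cohomological bookkeeping combined with standard facts about the topology of $G/B$.
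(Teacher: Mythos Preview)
Your argument is correct and follows the same strategy as the paper: reduce the restriction map to $\rho_x$ via homotopy equivalences on both sides, then invoke Proposition~\ref{prop:Injectivity} together with~$(*)$. The only difference is that the paper factors the equivalence $F_x\hookrightarrow \SS_B$ through the central fibre $\SS_{B,0}$---using Slodowy's result that $\varphi_{_{BS}}:\SS_B\to\h$ is a $C^\infty$-trivial fibre bundle to get $H^2(\SS_B,\IQ)\cong H^2(\SS_{B,0},\IQ)$---rather than contracting $\SS_B$ directly onto $F_x$ via the $\IC^*$-action as you do.
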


\begin{proof}  We denote by $\SS_{B,0}$ the central fibre of $\varphi_{_{BS}}$. Consider the 
commutative diagram
\begin{equation}
\xymatrix@!0{
 & & G\times^{B}\b \ar^(.65){\pi_{B}}[rr]
   & & \g
\\
 & T^*(G/B) \ar@{^{(}->}[ur]\ar[rr]
 & & N \ar@{^{(}->}[ur]
\\
 F_x\ar@{^{(}->}[dr]\ar@{^{(}->}[ur] & & \SS_B \ar'[r][rr]\ar'[u][uu]
   & & \SS\ar[uu]
\\
 &\SS_{B,0}\ar@{->}[uu] \ar[rr]\ar@{^{(}->}[ur]
 & & \SS_0\ar[uu]\ar@{^{(}->}[ur]
}
\end{equation}
Firstly, $H^2(G\times^{B}\b,\IQ)\to H^2(T^*(G/B),\IQ)$ is an isomorphism. Indeed, as
we know that $T^*(G/B)=G\times^{B}[\b,\b]$, this is clear from the inclusion $[\b,\b]\subset\b$.
Secondly, $H^2(\SS_B,\IQ)\to H^2(\SS_{B,0},\IQ)$ is an isomorphism. This follows from the
fact that $\varphi_{_{BS}}:\SS_B\to\h$ is a trivial fibre bundle of $C^\infty$-manifolds
(\cite{Slodowy-Utrecht}, Remark at the end of section 4.2). It follows that
\eqref{eq:RestrictionToSliceOnH2Level} is an isomorphism if and only if the restriction
\begin{equation}\label{eq:RestrictionToSliceCentralFiber}
H^2(T^*(G/B),\IQ)\to H^2(\SS_{B,0},\IQ)
\end{equation}
is. Now $F_x$ is the fibre over the unique $\IC^*$ fixed point $x$ of the proper equivariant
map $\SS_{B,0}\to\SS_0$. Hence $F_x\to \SS_{B,0}$ is a homotopy equivalence and the restriction
\begin{equation}\label{eq:RestrictionToF}
H^2(\SS_{B,0},\IQ)\xra{\sim}H^2(F_x,\IQ)
\end{equation}
is an isomorphism. The proposition now follows, using proposition \ref{prop:Injectivity},
from condition $(*)$.
\end{proof}

Let $\overline{\Amp}(\alpha_B)\subset H^2(G\times^{B}\b)$ be the relative nef cone of 
$\alpha_B:G\times^{B}\b\to\g \times_{\h/W}\h$ and similarly let 
$\overline{\Amp}(\alpha_{_{BS}})\subset H^2(\SS_B,\IR)$ be the nef cone of
$\alpha_{_{BS}}:\SS_B\to\SS\times_{\h/\Wgp}\h$.

\begin{proposition}\label{prop:NefCones} --- Assume that condition $(*)$ holds. The restriction 
isomorphism $H^2(G\times^{B}\b,\IR)\lra  H^2(\SS_B,\IR)$ maps $\overline{\Amp}(\alpha_B)$ onto
$\overline{\Amp}(\alpha_{_{BS}})$.
\end{proposition}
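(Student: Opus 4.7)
The proof proceeds by establishing the two inclusions separately. For the inclusion $r\bigl(\overline{\Amp}(\alpha_B)\bigr)\subseteq \overline{\Amp}(\alpha_{_{BS}})$, where $r$ is the restriction isomorphism of Proposition~\ref{prop:Restriction}, the verification is immediate: any irreducible curve $C\subset \SS_B$ contracted by $\alpha_{_{BS}}$ maps to a point in $\SS\times_{\h/\Wgp}\h\subset\g\times_{\h/\Wgp}\h$, so $C$ is also contracted by $\alpha_B$; hence for any $\alpha_B$-nef class $L$ one has $L|_{\SS_B}\cdot C = L\cdot C \geq 0$.

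For the reverse inclusion the plan is to run the easy direction uniformly over all Borel subgroups $B'$ containing $T$ and then invoke a tiling argument. For each such $B'$ one has simultaneous resolutions $\alpha_{B'}\colon G\times^{B'}\b' \to \g\times_{\h/\Wgp}\h$ and $\alpha_{B'S}\colon \SS_{B'} \to \SS\times_{\h/\Wgp}\h$. The spaces $G\times^{B'}\b'$ are pairwise birational over the common base, as are the $\SS_{B'}$, so their second cohomology groups are canonically identified. Since condition~$(*)$ does not depend on the choice of Borel, Proposition~\ref{prop:Restriction} supplies a restriction isomorphism $r_{B'}$ for every $B'$, compatible with these identifications. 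Classically the closed Weyl chambers $\overline{\Amp}(\alpha_{B'})$ tile $H^2(G\times^B\b,\IR)\cong \h^*_\IR$, and by the easy direction one has $\overline{\Amp}(\alpha_{B'})\subseteq \overline{\Amp}(\alpha_{B'S})$ inside the common vector space $H^2(\SS_B,\IR)$.

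Provided the interiors of the slice cones $\overline{\Amp}(\alpha_{B'S})$ are pairwise disjoint, the conclusion is immediate: any interior point $L$ of $\overline{\Amp}(\alpha_{_{BS}})$ cannot lie in the interior of any $\overline{\Amp}(\alpha_{B'S})$ for $B'\neq B$, hence by the easy direction not in the interior of any other Weyl chamber $\overline{\Amp}(\alpha_{B'})$; since the Weyl chambers tile $H^2$, one concludes $L\in \overline{\Amp}(\alpha_B)$, and boundary points follow by taking closures.

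The main technical obstacle is verifying this provisional disjointness, that is, showing that the Slodowy resolutions $\SS_{B'}$ are pairwise non-isomorphic as crepant resolutions of $\SS\times_{\h/\Wgp}\h$. This is exactly where condition~$(*)$ is essential beyond the bare isomorphism of Proposition~\ref{prop:Restriction}: the monodromy $\Wgp$-action on the family $\{G\times^{B'}\b'\}$ over $\h/\Wgp$ acts faithfully on $H^2(G\times^B\b) \cong \h^*$, and through the isomorphism of Proposition~\ref{prop:Restriction} the same action is transported faithfully to $H^2(\SS_B)$, which forces the $\SS_{B'}$ to remain pairwise non-isomorphic. Without $(*)$ some of the slice resolutions would coalesce---precisely the phenomenon behind the exceptional cases listed in Theorem~\ref{th:mainTheoremNonSimplyLaced}.
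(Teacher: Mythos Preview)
Your easy direction is fine and matches the paper. The reverse inclusion, however, has a genuine gap: you reduce to the claim that the slice resolutions $\SS_{B'}$ are pairwise non-isomorphic over $\SS\times_{\h/\Wgp}\h$, and justify this by saying that the $\Wgp$-action is faithful on $H^2$. But faithfulness of an action on cohomology does not imply that the objects being permuted are non-isomorphic as varieties over the base. The element $w$ carries $\SS_B$ to $\SS_{w(B)}$ only after twisting the $\h$-factor of the base by $w$; it gives no information about whether $\SS_B$ and $\SS_{w(B)}$ happen to be isomorphic \emph{over the base} via some other map. Without that non-isomorphism, you cannot invoke the general ``disjoint ample cones'' principle, and your tiling argument does not close.

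There is also a logical issue relative to the paper's structure: the classification of crepant resolutions of $\SS\times_{\h/\Wgp}\h$ (Theorem~\ref{th:CrepantResolutions}), which is where one would naturally learn that the $\SS_{B'}$ are pairwise non-isomorphic, is \emph{deduced from} Proposition~\ref{prop:NefCones}, not the other way around. So even if you could cite that classification, the argument would be circular.

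The paper's proof avoids all of this by a direct boundary-to-boundary argument. The nef cone $\overline{\Amp}(\alpha_B)$ is simplicial, spanned by the line bundles $L_i=G\times^B\IC(\chi_i^{-1})$ for the fundamental weights $\chi_i$. A general class $L$ on the facet opposite $L_i$ gives a morphism $\varphi_{|mL|}$ that contracts at least one rational curve in every subregular Springer fibre $F_p$. Since the slice meets the subregular orbit, such a curve lies in $\SS_B$, so $L|_{\SS_B}$ cannot be ample and hence lies on $\partial\overline{\Amp}(\alpha_{_{BS}})$. Thus $\partial\overline{\Amp}(\alpha_B)$ maps into $\partial\overline{\Amp}(\alpha_{_{BS}})$, and together with the easy inclusion this forces equality of the cones.
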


\begin{proof} As any relatively ample line bundle $L$ on $G\times^{B}\b$ restricts to a relatively
ample line bundle on $\SS_B$, it is clear that the image of $\overline{\Amp}(\alpha_B)$ is contained
in $\overline{\Amp}(\alpha_{_{BS}})$. The nef cone $\overline{\Amp}(\alpha_B)$ is a simplicial rational
polyhedral cone in $H^2(G\times^{B}\b,\IR)$ spanned by line bundles $L_i=G\times^{B}\IC(\chi_i^{-1})$
where $\IC(\chi_i)$ is the one dimension representation of $B$ corresponding to the character $\chi_i$
and $\chi_1,\dots,\chi_{\rk\g}$ are the fundamental weights of $B$. If $L$ is a general element in the
one codimensional face of $\overline{\Amp}(\alpha_B)$ opposite to $L_i$, then the morphism
$\varphi_{|mL|}$ associated to $L$ contracts for every positive $m$ at least one of the rational curves
in any fibre $F_p$ for $p$ subregular. In particular, the restriction of $L$ to $\SS_B$ cannot be ample 
and hence is contained in the boundary of $\overline{\Amp}(\alpha_{_BS})$. This shows that 
$\partial\overline{\Amp}(\alpha_B)$ is mapped to $\partial\overline{\Amp}(\alpha_{_BS})$.
\end{proof}

\begin{proof}[Proof of Theorem \ref{th:CrepantResolutions}.]
Recall that under the isomorphism
\begin{equation}
H^2(G\times^B\b,\IR)\isom H^2(G/T,\IR)\isom\h_{\IR}^*
\end{equation}
the nef cone $\overline{\Amp}(\alpha_B)$ is mapped to
the Weyl chamber determined by the choice of $B$. As
$\h_{\IR}^*$ is the union of all Weyl chambers, it follows that
\begin{equation}
H^2(G\times^B\b,\IR)=\bigcup_{w\in\Wgp}\overline{\Amp}(\alpha_{_{w(B)}})
\end{equation}
where $w(B)=wBw^{-1}$.
Proposition \ref{prop:NefCones} implies that
\begin{equation}\label{eq:NefCones}
H^2(\SS_B,\IR)=\bigcup_{w\in\Wgp}\overline{\Amp}(\alpha_{_{w(B)S}})
\end{equation}
Now recall the following general fact. Suppose that
$\beta^\prime:Z^\prime\to Z$ and
 $\beta^{\prime\prime}:Z^{\prime\prime}\to Z$
are projective crepant resolutions of an affine normal Gorenstein variety $Z$.
Then the rational map $\gamma: Z^\prime\dasharrow Z^{\prime\prime}$ is
an isomorphism in codimension one and induces an isomorphism
$\gamma^*:H^2(Z^{\prime\prime},\IR)\xra{\sim}H^2(Z^{\prime},\IR)$.
Assume that $\lambda\in\Amp(\beta^{\prime})\cap\gamma^*(\Amp(\beta^{\prime\prime}))$
is a class corresponding to line bundles
$L^\prime\in\Pic(Z^\prime/Z)$ and
$L^{\prime\prime}\in\Pic(Z^{\prime\prime}/Z)$. Replacing $L^\prime$ and $L^{\prime\prime}$ by
a sufficiently high power, we assume that  $L^\prime$ and $L^{\prime\prime}$
are relatively very ample. For codimension reasons,
$H^0(Z^\prime,L^\prime)\isom H^0(Z^{\prime\prime},L^{\prime\prime})$.
It particular $Z^\prime$ and $Z^{\prime\prime}$ are embedded with the same
image into some projective space $\IP^n_Z$. Phrased differently,
if $\gamma: Z^\prime\dasharrow Z^{\prime\prime}$ is not a isomorphism
then the ample cones must be disjoint.

Applying this argument to the variety $\SS$, we infer from
\eqref{eq:NefCones} that any resolution of $\SS$ must be of the form
$\varphi_B:\SS_B\to\SS$.
\end{proof}

\section{Universal Poisson deformations}\label{section:UniversalPoissonDeformations}

We now go back to the situation of the preceding sections and consider for a given Borel
subgroup $B\supset T$ the simultaneous resolution of \eqref{eq:SlodowySliceCD}
\begin{equation}
\begin{CD}
\SS_B @>{\pi_B}>> \SS \\
@V\varphi_{_{B\SS}} VV @VV{\varphi_\SS}V \\
\h @>>> \h/\Wgp
\end{CD}
\end{equation}
We have seen in section \ref{section:SlodowySlices} that $S$ has a natural Poisson structure
relative to $\h/W$ as well as $\SS_B$ relative to $\h$. 

We start proving Theorem \ref{th:mainTheoremSimplyLaced}. The strategy is to show that the
commutative diagram of Slodowy slices coincides with the abstract one construted in section
\ref{section:ReviewPoissonDeformations}.

Let us consider $\varphi_{_{B\SS}} : \SS_B \to \h$ and denote by $\SS_{B,0}$ the central
fibre of $\varphi_{_{B\SS}}$. Equivalence classes of infinitesimal Poisson
deformations of $\SS_{B,0}$ are
classified by the 2-nd hypercohomology of the Lichnerowicz-Poisson complex
\cite[Prop.\ 8]{Namikawa-Flops}
$$ \wedge^1 \Theta_{\SS_{B,0}} \stackrel{\delta}\to \wedge^2
\Theta_{\SS_{B,0}} \stackrel{\delta}\to \ldots, $$
where $\wedge^p \Theta_{\SS_{B,0}}$ is placed in degree $p$. Since the Poisson structure of 
$\SS_{B,0}$ comes from a symplectic structure, the Lichnerowicz-Poisson complex can be 
identified with the truncated De Rham complex
$$ \Omega^1_{\SS_{B,0}} \stackrel{d}\to \Omega^2_{\SS_{B,0}}\stackrel{d}\to\ldots$$
Using the fact that 
$$H^1(\SS_{B,0}, \mathcal{O}_{\SS_{B,0}}) = H^2(\SS_{B,0}, \mathcal{O}_{\SS_{B,0}}) = 0,$$
we see \cite[Cor.\ 10]{Namikawa-Flops} that
$$\mathbb{H}^2(\SS_{B,0}, \Omega^{\geq 1}_{\SS_{B,0}}) = H^2(\SS_{B,0},\IC).$$
In the language of the Poisson deformation functor (cf.\ section \ref{section:ReviewPoissonDeformations})
we have
$$T_{\mathrm{PD}_{\SS_{B,0}}} = H^2(\SS_{B,0},\IC).$$
The relative symplectic 2-form of $\SS_B /\h$ defines a class in
$H^2(\SS_{B,t}, \IC)$ for each fibre $\SS_{B,t}$ of $\SS_B \to \h$. Identifying
$H^2(\SS_{B,t}, \IC)$ with $H^2(\SS_{B,0}, \IC)$ we get a period map
$$ \kappa:\h \to H^2(\SS_{B,0}, \IC),$$
which can be regarded as the Kodaira-Spencer map for the Poisson deformation $\SS_{B} \to \h$.
%
%
%
%
\begin{proposition}\label{proposition:PoissonOverh} --- 
Suppose condition $(*)$ holds. Then $\varphi_{_{B\SS}}:\SS_B\to\h$
is the formal universal Poisson deformation of $\SS_{B,0}$.
\end{proposition}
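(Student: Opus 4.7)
My plan is to apply Namikawa's prorepresentability Theorem~\ref{th:NamikawaDeformation} to the pair $(S_0, S_{B,0} \to S_0)$: the $\IC^*$-action from Section~\ref{section:SlodowySlices} contracts $S_0$ to $x$ and makes the symplectic form homogeneous of positive weight, while $S_{B,0} \to S_0$ is the crepant resolution induced from the Springer resolution. The theorem then produces a formally universal Poisson deformation of $S_{B,0}$ over $\IA^d$, where $d = \dim H^2(S_{B,0}, \IC)$. The homotopy equivalence $F_x \hookrightarrow S_{B,0}$ established in the proof of Proposition~\ref{prop:Restriction} gives $H^2(S_{B,0}, \IC) \cong H^2(F_x, \IC)$, so hypothesis $(*)$ forces $d = \rk\g = \dim\h$.

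Since $\varphi_{BS}: S_B \to \h$ is itself a Poisson deformation of $S_{B,0}$, the universal property supplies a classifying morphism $\Psi : \widehat{\h}_0 \to \widehat{\IA}^d_0$ between formally smooth bases of equal dimension $d$. Hence $\Psi$ is a formal isomorphism---equivalently, $\varphi_{BS}$ is formally universal---if and only if its differential at $0$, namely the period map $\kappa : \h \to H^2(S_{B,0}, \IC)$ introduced just above the proposition, is a linear isomorphism. By equality of dimensions, it is enough to show $\kappa$ is injective.

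To prove injectivity, I would factor $\kappa$ through the Grothendieck-Springer family $\varphi_B : G \times^B \b \to \h$ of \eqref{eq:AdjointQuotientCD}. Since $S_B = \pi_B^{-1}(S)$ and the relative symplectic structure on $S_B/\h$ is inherited from that on $G \times^B \b/\h$ by restriction, one obtains a commutative triangle
\begin{equation*}
\xymatrix{
\h \ar[r]^-{\tilde\kappa} \ar[rd]_-{\kappa} & H^2(T^*(G/B), \IC) \ar[d]^-{\res}\\
& H^2(S_{B,0}, \IC)
}
\end{equation*}
in which $\tilde\kappa$ is the analogous period map for $\varphi_B$. Under $(*)$, Proposition~\ref{prop:Restriction} together with the isomorphism \eqref{eq:RestrictionToF} shows that $\res$ is an isomorphism, and the classical description of the Grothendieck-Springer family identifies $\tilde\kappa$ (up to normalisation) with the Killing form isomorphism $\h \to \h^*$, once $H^2(T^*(G/B), \IC)$ is identified with $H^2(G/B, \IC) \cong \h^*$ via Chern classes of the line bundles $\mathcal{O}(\chi)$ for characters $\chi$ of $T$. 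The hard part will be precisely this last step: verifying both the commutativity of the triangle (the compatibility of the relative symplectic forms on the two families) and the classical computation of $\tilde\kappa$ requires a careful explicit treatment.
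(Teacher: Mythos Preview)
Your approach is essentially the same as the paper's: both arguments hinge on the commutative triangle relating $\kappa$, the analogous period map $\kappa'$ for $G\times^B\b\to\h$, and the restriction $H^2(T^*(G/B),\IC)\to H^2(S_{B,0},\IC)$, with the restriction shown to be an isomorphism via Proposition~\ref{prop:Restriction} under $(*)$. The one substantive difference is in how the bijectivity of $\kappa'=\tilde\kappa$ is established: where you propose to compute $\tilde\kappa$ explicitly as the Killing form identification $\h\to\h^*\cong H^2(G/B,\IC)$ and flag this as the hard part, the paper bypasses the computation entirely by citing \cite[Proposition~2.7]{Namikawa-PoissonDeformations2}, which already asserts that $G\times^B\b\to\h$ is the formally universal Poisson deformation of $T^*(G/B)$---so $\kappa'$ is automatically an isomorphism. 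The paper then concludes that $\kappa$ is an isomorphism (hence $\varphi_{BS}$ is formally semi-universal), and upgrades to formal universality via \cite[Corollary~2.5]{Namikawa-PoissonDeformations1}. Your route through the explicit period computation would also work, but the citation makes it unnecessary.
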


\begin{proof} Consider the Kodaira-Spencer maps $\kappa:\h\to H^2(\SS_{B,0},\IC)$ and
$\kappa^\prime:\h\to T_{\mathrm{PD}_{T^*(G/B)}}=H^2(T^*(G/B),\IC)$ associated to the Poisson
deformation $\varphi_{_{B\SS}}:\SS_B\to\h$ of $\SS_{B,0}$ and $G\times^B\b\to\h$ of $T^*(G/B)$, 
respectively. As the relative symplectic structure of $\SS_B$ is the restriction of the relative 
symplectic structure of $G\times^B\b$, the following diagram is commutative
\begin{equation}
\xymatrix@=10pt{
&\h\ar[dl]_{\kappa^\prime}\ar[dr]^{\kappa}\\
H^2(T^*(G/B),\IC)\ar[rr]&&H^2(\SS_{B,0},\IC)
}
\end{equation}
We have seen in the proof of Proposition \ref{prop:Restriction} that the horizontal restriction
map is an isomorphism. By (\cite{Namikawa-PoissonDeformations2}, Proposition 2.7), the Poisson 
deformation $G\times^B\b\to\h$ is formally universal at $0\in\h$. Hence $\kappa^\prime$ is an isomorphism,
and so is $\kappa$. This means that $\varphi_{_{B\SS}}$ is formally semi-universal. By 
\cite{Namikawa-PoissonDeformations1}, Corollary 2.5, $\varphi_{_{B\SS}}$ is formally universal.
\end{proof}

We next treat $\varphi_\SS:\SS\to\h/W$. By Proposition \ref{proposition:PoissonOverh} and Theorem \ref{th:NamikawaDeformation},
the base space of the universal Poisson deformation of $\So$ can be written as $\h/H$ for some finite 
subgroup $H\subset\LieGl(\h)$. The following proposition asserts that $H$ is actually the 
Weyl group $W$ and finishs the proof of  Theorem \ref{th:mainTheoremSimplyLaced}. 

\begin{proposition}--- Suppose condition $(*)$ holds. Then $\varphi_\SS:\SS\to\h/W$
is the formally universal Poisson deformation of $\SS_0$.
\end{proposition}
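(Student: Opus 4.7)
\emph{Proof plan.} The strategy is to show that the classifying map from $\h/\Wgp$ to the base of the formally universal Poisson deformation of $\SS_0$ furnished by Theorem~\ref{th:NamikawaDeformation} is an isomorphism, by exploiting the explicit list of crepant resolutions of $\SS\times_{\h/\Wgp}\h$ provided by Theorem~\ref{th:CrepantResolutions}.

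Write the universal Poisson deformation of $\SS_0$ as $\mathcal{X}\to\IA^d$. Theorem~\ref{th:NamikawaDeformation} combined with Proposition~\ref{proposition:PoissonOverh} identifies the source of the accompanying finite Galois cover $\psi$ with $\h$, and by Chevalley--Shephard--Todd its Galois group $H$ is a complex reflection group in $\LieGl(\h)$, so that $\IA^d\cong\h/H$. Since $\pi_{BS}:\SS_B\to\SS\times_{\h/\Wgp}\h$ is a crepant resolution of rational singularities, one has $\pi_{BS*}\kO_{\SS_B}=\kO_{\SS\times_{\h/\Wgp}\h}$, and the compatibility $\pi_*\mathcal{Y}=\psi^*\mathcal{X}$ from Theorem~\ref{th:NamikawaDeformation} becomes the identification $\psi^*\mathcal{X}=\SS\times_{\h/\Wgp}\h$.

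Being itself a Poisson deformation of $\SS_0$, $\varphi_\SS$ is classified by a unique map $f:\h/\Wgp\to\h/H$ with $f(0)=0$ and $\SS\cong f^*\mathcal{X}$. Pulling back $\SS=f^*\mathcal{X}$ along $p_W:\h\to\h/\Wgp$ yields $(f\circ p_W)^*\mathcal{X}=p_W^*\SS=\SS\times_{\h/\Wgp}\h=\psi^*\mathcal{X}$, and uniqueness of classifying maps into the universal family $\mathcal{X}$ forces $\psi=f\circ p_W$. Consequently $\Wgp\subset H$ as subgroups of $\LieGl(\h)$, and $f$ is a finite surjection of degree $|H|/|\Wgp|$.

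It remains to prove $|H|=|\Wgp|$. I will compare two enumerations of the projective crepant resolutions of the pullback family $\psi^*\mathcal{X}=\SS\times_{\h/\Wgp}\h$. Theorem~\ref{th:CrepantResolutions} together with Proposition~\ref{prop:NefCones} shows that, under $(*)$, these are precisely the $\alpha_B$ indexed by the $|\Wgp|$ Borel subgroups $B\supset T$, and that their relative nef cones are the $|\Wgp|$ Weyl chambers of $W$ inside $H^2(\SS_B,\IR)\cong\h_\IR^*$. On the other hand, within Namikawa's framework (cf.\ \cite{Namikawa-Flops,Namikawa-PoissonDeformations2}) such projective crepant resolutions are parametrised by the chambers of the reflection-group action of $H$ on the same $H^2$, giving a count of $|H|$. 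Hence $|H|=|\Wgp|$, so $H=\Wgp$, $f$ is an isomorphism, and $\varphi_\SS$ is the formally universal Poisson deformation of $\SS_0$. The main obstacle is this second enumeration, which depends on the chamber-theoretic description of projective crepant resolutions in Namikawa's theory; once this $H$-chamber dictionary is in hand, the counting is immediate.
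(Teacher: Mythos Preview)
Your overall strategy matches the paper's: both set up the universal Poisson deformation $\mathfrak{X}\to\h/H$ via Theorem~\ref{th:NamikawaDeformation} and Proposition~\ref{proposition:PoissonOverh}, factor $\psi$ as $\h\to\h/\Wgp\to\h/H$ to obtain $\Wgp\subset H$, and then invoke Theorem~\ref{th:CrepantResolutions} to control crepant resolutions of $\SS\times_{\h/\Wgp}\h$. The divergence is in how the reverse inclusion $H\subset\Wgp$ is obtained.

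Your counting argument has a genuine gap at the ``second enumeration''. The assertion that projective crepant resolutions of $\psi^*\mathfrak{X}$ are parametrised by the chambers of $H$, yielding a count of $|H|$, is not a result one can simply cite from \cite{Namikawa-Flops,Namikawa-PoissonDeformations2}. What Namikawa's theory supplies is the Galois cover $\psi$ and the identification of $H$ as a reflection group; it does not directly hand you a bijection between $H$-chambers and crepant resolutions of the \emph{total space} $\psi^*\mathfrak{X}$. To extract that count you would need to show that the $H$-action on the set of such resolutions (via pullback along $h\times\id$ on $\h\times_{\h/H}\mathfrak{X}$) is free. This freeness is exactly where universality of $\varphi_{B\SS}$ enters, and proving it amounts to the paper's argument. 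You flag this yourself as ``the main obstacle'', but leaving it unfilled means the proof is incomplete.

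The paper closes the loop directly, without counting. For each $h\in H$ it forms the pulled-back resolution $S'$, uses Theorem~\ref{th:CrepantResolutions} to identify $S'\cong S_{w(B)}$ for some $w\in\Wgp$, then composes with the explicit Weyl-group isomorphism $w':S_B\to S_{w(B)}$ to obtain an automorphism $h'qw'$ of $S_B$ over $\h$ that restricts to the identity on the central fibre $S_{B,0}$. Universality of $\varphi_{B\SS}$ (Proposition~\ref{proposition:PoissonOverh}) then forces $hw=\id$, so $h\in\Wgp$. This element-by-element argument is what your chamber dictionary would unpack to; supplying it turns your sketch into a proof, and at that point it coincides with the paper's.
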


\begin{proof} Let $\mathfrak{X}\to\h/H$ be the universal $\IC^*$-equivariant Poisson deformation
of $\So$. As $\underline{\Spec}(\pi_{B*}\kO_{S_B})\isom \h\times_{\h/W}S$ the morphism $h\to\h/H$ factors
through the quotient map $\h\to\h/\Wgp$ and the classifying morphism $\h/\Wgp\to\h/H$ for the Poisson deformation
$S\to\h/\Wgp$ of $\So$. (Strictly speaking the morphisms $\h\to\h/\Wgp\to\h/H$ are first
defined on the respective completions of the origin. As $\IC^*$ acts with strictly positive weights on all three
spaces and all maps are equivariant, the morphisms are defined as stated above.)

Consider the commutative diagram
\begin{equation}
\begin{CD}
\SS_B @>{\pi_B}>> \SS @>>> \mathfrak{X}\\
@VVV @VVV @VVV\\
\h @>>> \h/\Wgp   @>>> \h/H.
\end{CD}
\end{equation}
It follows from the factorisation  $\h\to\h/\Wgp\to\h/H$ that $\Wgp\subset H$. We need to prove the converse inclusion.

Let $h\in H$. Define $S^\prime$ to be the fibre product
\begin{equation}
\begin{CD}
S^\prime @>h^\prime>\sim>\SS_B \\
@VVV @VVV \\
\h\times_{\h/H}\mathfrak{X}@>{h\times\id}>\sim>\h\times_{\h/H}\mathfrak{X}
\end{CD}
\end{equation}
Note that $\h\times_{\h/H}\mathfrak{X} = \h\times_{\h/W}S$. Since $\varphi^\prime$ is a crepant resolution, by
Theorem \ref{th:CrepantResolutions} there exists an element $w\in\Wgp$ and a commutative diagram
\begin{equation}
\xymatrix@=10pt{
\SS_{w(B)}\ar[rr]^{q}_\sim\ar[ddr]&&S^\prime\ar[ddl]\\
\\
&\h\times_{\h/H}\mathfrak{X}
}
\end{equation}
On the other hand, $w$ defines a map
\begin{equation}
w^\prime:G\times^B\b\to G\times^{w(B)}\ad(w)(\b),\quad [g,\xi]\mapsto[gw^{-1},\ad(w)(\xi)].
\end{equation}
and induces a cartesian diagram
\begin{equation}
\begin{CD}
\SS_B @>w^\prime>\sim>\SS_{w(B)} \\
@VVV @VVV \\
\h\times_{\h/H}\mathfrak{X}@>{w\times\id}>\sim>\h\times_{\h/H}\mathfrak{X}
\end{CD}
\end{equation}
By construction, $h^\prime q w^\prime:\SS_B\to\SS_B$ induces the identity on the central fibre
$\phi_B^{-1}(0)$, hence the composite diagram
\begin{equation}
\begin{CD}
\SS_B @>h^\prime q w^\prime>\sim>\SS_{B} \\
@V\varphi_{B} VV @VV\varphi_{B} V \\
\h@>{hw}>\sim>\h
\end{CD}
\end{equation}
is an automorphism of the universal Poisson deformation of $\SS_{B,0}$. It follows that $hw=\id$, hence $h\in\Wgp$.
\end{proof}

When $(*)$ is not satisfied, $\dim \h < b_2(\SS_{B,0},\IC)$. Since the base spaces of universal Poisson deformations of $\SS_{B,0}$ 
and $S_0$ should have dimension $b_2(\SS_{B,0},\IC)$, none of $\varphi_{_{B\SS}}:\SS_B\to\h$ or $\varphi_\SS:\SS\to\h/W$ is 
universal when $(*)$ does not hold. 
 
We will investigate when $(*)$ is satisfied in the following sections.

\section{The simply laced case}

It remains to prove the following proposition.

\begin{proposition}\label{prop:ConditionStarForSimplyLacedg} --- 
Condition $(*)$ holds for all non-regular nilpotent element $x$ in
a simple Lie algebra of type $A,D,E$.
\end{proposition}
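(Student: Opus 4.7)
The plan is to sandwich $\dim H^2(F_x,\IQ)$ between $\rk\g$ from below and $\rk\g$ from above, using the orbit-closure structure of $\NCone$ to reduce everything to a single subregular test case. The lower bound is immediate from Proposition~\ref{prop:Injectivity}, which provides an injection $H^2(T^*(G/B),\IQ)\hookrightarrow H^2(F_x,\IQ)$; since $T^*(G/B)$ is a vector bundle over $G/B$ we have $H^2(T^*(G/B),\IQ)\isom H^2(G/B,\IQ)$, which is $\rk\g$-dimensional. Thus $\dim H^2(F_x,\IQ)\geq \rk\g$ in every simple type, simply laced or not, and no special feature of $\g$ is used at this step.

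For the upper bound I plan to invoke Lemma~\ref{lemma:RestrictionIsInjection} together with the observation, recorded in the introduction, that the closed subset $\NCone\setminus\ko_{reg}$ is irreducible with dense orbit $\ko_{sub}$, and hence equals $\overline{\ko_{sub}}$. Consequently, for any subregular nilpotent $v$, every non-regular $x$ satisfies $x\in\overline{G.v}$; since $v$ itself is non-regular, the lemma applies and yields
\begin{equation*}
\dim H^2(F_x,\IQ)\leq \dim H^2(F_v,\IQ).
\end{equation*}
The proposition therefore reduces to the single identity $\dim H^2(F_v,\IQ)=\rk\g$ for subregular $v$ in type $A$, $D$ or $E$.

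This last identity is the heart of the matter and the only place where the simply laced hypothesis is genuinely used. I will appeal to Slodowy's explicit description of the subregular Springer fibre (\cite{Slodowy-LectureNotes}): when $\g$ is simply laced, $F_v$ is a reduced transverse union of $\rk\g$ copies of $\IP^1$ meeting along the edges of the Dynkin diagram of $\g$, or equivalently (cf.\ Theorem~\ref{th:Brieskorn} and the homotopy equivalence $F_v\simeq \SS_{B,0}$ extracted in the proof of Proposition~\ref{prop:Restriction}) is homotopy equivalent to the exceptional divisor of the minimal resolution of the Kleinian surface singularity $\SS_0$ of the same Dynkin type as $\g$. A tree of $n$ smooth rational curves retracts to a wedge of $n$ two-spheres, so $\dim H^2(F_v,\IQ)=\rk\g$, and combining the two bounds closes the sandwich.

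I expect the only delicate point to be cosmetic: it must be checked that Slodowy's identification really gives the \emph{reduced} Dynkin curve of $\g$ in types $A$, $D$, $E$, so that each of the $\rk\g$ components contributes independently to $H^2$. In non-simply-laced $\g$ this last step fails precisely because the subregular Springer fibre is instead the Dynkin curve of a \emph{larger}, unfolded ADE type, carrying strictly more than $\rk\g$ components; this is exactly why the subregular orbit appears in the exceptional list of Theorem~\ref{th:mainTheoremNonSimplyLaced} for $B_n$, $F_4$ and (via the isomorphism $C_2\simeq B_2$ and related phenomena) $G_2$. No combinatorial analysis of intermediate orbits is needed in the simply laced setting, since the sandwich argument handles them all uniformly once the subregular case is settled.
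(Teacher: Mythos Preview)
Your proposal is correct and follows essentially the same approach as the paper: both arguments sandwich $\dim H^2(F_x,\IQ)$ between $\rk\g$ (via the injectivity in Proposition~\ref{prop:Injectivity}) and $\dim H^2(F_v,\IQ)$ for subregular $v$ (via Lemma~\ref{lemma:RestrictionIsInjection}), reducing everything to the subregular case. The only cosmetic difference is that where you compute $\dim H^2(F_v,\IQ)=\rk\g$ by hand from the Dynkin tree of $\IP^1$'s, the paper simply cites Slodowy's result (\cite{Slodowy-Utrecht}, 4.5) that the restriction map $a_p$ is itself an isomorphism in the simply laced case, which amounts to the same thing.
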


\begin{proof} Note that in diagram \eqref{eq:Dreiecksdiagram} the map $a_p$ is an 
isomorphism by (\cite{Slodowy-Utrecht}, 4.5), hence $a_x$ is an isomorphism as well.
\end{proof}

\section{Lie algebras of type $C$}

Let $\g$ be a classical Lie algebra of type $B_n$ or $C_n$. The nilpotent orbits may by labelled by
partitions $d=[d_1\geq d_2\geq \dots \geq d_\ell]$ of $2n+1$ resp.\ $2n$ corresponding to the Jordan type
of the elements in the orbit. For Lie algebras of type $B$, \ie $\g=\Lieso_{2n+1}$, the map
$d\mapsto\kO_d$ defines a  bijection between the set of those partitions where each even part
appears with even multiplicity and the set of nilpotent orbits (\cite{CollingwoodMcGovern},
Theorem 5.1.2). Similarly, for Lie algebras of type $C$, \ie $\g=\Liesp_{2n}$, the map $d\mapsto\kO_d$
defines a  bijection between the set of those partitions where each odd part appears with even multiplicity
and the set of nilpotent orbits (\cite{CollingwoodMcGovern}, Theorem 5.1.3). The set of partitions is
partially ordered by the rule
\begin{equation}
d\leq d^\prime\quad:\Leftrightarrow\quad\sum_{i=1}^k d_i\leq
\sum_{i=1}^k d^\prime_i\text{ for all }k.
\end{equation}
By a theorem of Gerstenhaber and Hesselink (\cite{CollingwoodMcGovern}, Theorem 6.2.5)
$\kO_d\subset\overline{\kO_{d^\prime}}$ if and only if $d\leq d^\prime$.

In this section, we study Slodowy slices for the Lie algebra $\g=\Liesp(2n)$ for $n\geq 2$.

\begin{proposition}\label{prop:ConditionStarForC} ---
Let $d$ be the Jordan type of a non-regular nilpotent orbit in $\Liesp_{2n}$, and let
$F_d$ denote the isomorphism type of the fibre $\pi^{-1}(x)$ over
an element $x\in\kO_d$ under the Springer resolution $\pi:G\times^B\b\to\g$.
\begin{enumerate}
\item $\dim H^2(F_d,\IQ)=n+1$ for $d=[n,n]$ and $d=[2n-2i,2i]$,  where $i=1,\ldots,\lfloor \tfrac n 2 \rfloor$.
\item For all other $d$, $\dim H^2(F_d,\IQ)=n$.
\end{enumerate}
\end{proposition}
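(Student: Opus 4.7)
My plan rests on a sandwich bound combined with explicit geometric analysis of Slodowy slices.

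\emph{Step 1 (sandwich).} For every non-regular orbit $d$, Proposition \ref{prop:Injectivity} gives $\dim H^2(F_d, \IQ) \geq n = \rk(\Liesp_{2n})$. For the upper bound, every non-regular orbit lies in the closure of the subregular orbit $\kO_{[2n-2, 2]}$, so Lemma \ref{lemma:RestrictionIsInjection} gives $\dim H^2(F_d, \IQ) \leq \dim H^2(F_{[2n-2, 2]}, \IQ)$. The subregular Springer fiber for type $C_n$ is, by Slodowy's classical description, a chain of $n+1$ projective lines whose dual graph is the unfolded Dynkin diagram $D_{n+1}$, so $\dim H^2 = n+1$ there. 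Consequently $\dim H^2(F_d, \IQ) \in \{n, n+1\}$ for every non-regular $d$, and the proposition reduces to a dichotomy: for which orbits does the value $n+1$ occur?

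\emph{Step 2 (list orbits give $n+1$).} The case $i=1$ is the subregular one, already settled. For the orbit $[n, n]$ I would identify the Slodowy slice $S_0$ with the four-dimensional symplectic hypersurface $a^2x + 2aby + b^2z + (xz - y^2)^n = 0$ of Example \ref{ex:hypersurfaces1}, via the Gan--Ginzburg Hamiltonian reduction and an $\Liesl_2$-triple adapted to the Jordan type $[n, n]$; the minimal symplectic resolution (obtained by resolving the Jacobian ideal of the defining polynomial) has second Betti number $n+1$, and this equals $\dim H^2(F_d, \IQ)$ after tracing through the isomorphisms of Proposition \ref{prop:Restriction} and \eqref{eq:RestrictionToF}. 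For the intermediate orbits $[2n-2i, 2i]$ with $1 < i < n/2$, I would invoke the Kraft--Procesi analysis of transverse singularities between nilpotent orbits \cite{K-P}, which yields an analogous hypersurface-type model for the germ $(S_0, x)$ whose minimal resolution again has second Betti number $n+1$.

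\emph{Step 3 (other orbits give $n$).} For each non-regular $d$ outside the list, I would exhibit a non-list orbit $d' \geq d$ in the Gerstenhaber--Hesselink order for which $\dim H^2(F_{d'}, \IQ) = n$ can be established directly; the monotonicity of Lemma \ref{lemma:RestrictionIsInjection} then transfers the bound down to $d$, and combined with the lower bound from Step 1 gives equality. The tractable choices of $d'$ are those whose reductive centralizer has a non-trivial product Levi of the form $\Liesp_{2k_1} \times \cdots \times \Liesp_{2k_r}$ with $r \geq 2$ or a mixed $\Liesp \times \Lieso$-factor with positive orthogonal part; for these $(S_0, x)$ splits compatibly and an induction on $n$ closes the loop. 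The combinatorics required is a straightforward Hasse-diagram check on $C$-type partitions once one verifies that every non-list orbit is dominated in partition order by such a product-centralizer orbit.

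The principal obstacle is Step 2: producing the hypersurface presentation of $S_0$ for the orbits $[n, n]$ and $[2n-2i, 2i]$ and computing the second Betti number of its minimal symplectic resolution. This is exactly the content of the new singular symplectic hypersurfaces of Example \ref{ex:hypersurfaces1}, and is the point where the paper's two main threads --- universal Poisson deformations and new symplectic hypersurfaces --- converge.
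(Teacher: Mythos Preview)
Your Step 1 (the sandwich $n\le \dim H^2(F_d,\IQ)\le n+1$) is correct and is exactly how the paper sets things up. The problem is Step 2.

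You have misidentified which orbit the hypersurface of Example \ref{ex:hypersurfaces1} belongs to. The polynomial $a^2x+2aby+b^2z+(xz-y^2)^n$ is the Slodowy slice to the orbit of Jordan type $[2n-2,1,1]$, \emph{not} to $[n,n]$; this is spelled out explicitly at the end of the section containing the proposition. That orbit has three Jordan blocks and lies in the ``good'' list, with $\dim H^2(F_{[2n-2,1,1]},\IQ)=n$, so the resolution of that hypersurface has second Betti number $n$, not $n+1$. Consequently your proposed computation for $[n,n]$ is based on the wrong variety, and the argument for the intermediate $[2n-2i,2i]$ via Kraft--Procesi is likewise without content, since no hypersurface model is attached to those orbits in the paper. (More conceptually: the hypersurface examples are the orbits for which condition $(*)$ \emph{holds}, i.e.\ where the Slodowy slice \emph{is} the universal Poisson deformation; they are not the exceptional orbits of Theorem \ref{th:mainTheoremNonSimplyLaced}.)

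The paper's actual proof does not go through Slodowy slices or resolutions of $S_0$ at all. It analyses the Springer fibres $F_d$ directly, using the map $\psi:F_d\to\IP(\Ker x)$ that sends an isotropic flag to its first term. One then reads off the Jordan type of the induced endomorphism on $W_1^\perp/W_1$ and argues by induction on $n$: for the length-two partitions the chain $[2n-2,2]>[2n-4,4]>\cdots>[n,n]$ together with Lemma \ref{lemma:RestrictionIsInjection} reduces everything to showing $b_2(F_{[n,n]})=n+1$, which is done via a Leray spectral sequence argument (with separate treatment of $n$ odd and $n$ even); for partitions with at least three parts one reduces to $[2n-2,1,1]$ and describes $F_{[2n-2,1,1]}$ explicitly as a chain of $n-1$ Hirzebruch surfaces $\IF_1$, whence $b_2=n$. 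Your Step 3 could in principle be made to work along these lines, but as written the ``product-centralizer'' criterion is too vague to replace the concrete reduction to $[2n-2,1,1]$.
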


As a consequence, condition $(*)$ holds for all non-regular nilpotent orbits except
$d=[n,n]$ or $d=[2n-2i,2i]$ for some $i=1,\ldots,\lfloor \tfrac n 2 \rfloor$. This proves part
$(C_n)$ of Theorem \ref{th:mainTheoremNonSimplyLaced}.
In the proof, we will use the following interpretation of the fibres of the Springer resolution:
The fibre $F_d=\pi^{-1}(x)$ can be identified with the set of Borel subalgebras in $\g$
containing $x$. Borel algebras $\b$ in the symplectic Lie algebra $\Liesp_{2n}$ correspond in turn to
flags $[W]\;:\;W_1\subset W_2\subset\ldots W_n\subset \IC^{2n}$ of isotropic subspaces, and the condition
$x\in\b$ translates into $x(W_i)\subset W_{i-1}$ and $x(W_1)=0$. Sending a flag $[W]$ to the line $W_1$
defines a morphism $\psi:F_{[n,n]}\to \IP(\Ker(x))$. For a given line $[W_1]\in\IP(\Ker(x))$, we may
consider the symplectic vector space $\overline V=W_1^\perp/W_1\isom\IC^{2n-2}$ with the induced
nilpotent endomorphism $\bar x$. If $\overline d$ denotes the Jordan type of $\overline x$, then the
fibre $\psi^{-1}([W_1])$ is isomorphic to $F_{\overline d}$. This observation will allow us to argue
by induction on the dimension of the symplectic space.

\begin{proof} For any partition $d$ of length $\geq 3$, we have $d\leq [2n-2,1,1]$, whereas
for any partition $d$ of length $2$, we have $[n,n]\leq d$.

1. We first consider the case of a partition $d=[2n-k,k]$ with two parts. As the multiplicity
of each odd part needs to be even, this leaves the cases $[2n-2i,2i]$ and, if $n$ is odd, in addition
the partition $[n,n]$.

The subregular nilpotent orbit has Jordan type $[2n-2,2]$.   
It is well-known (cf. \cite{Slodowy-LectureNotes}) that the fibre $F_{[2n-2,2]}$
is the union of $n+1$ projective lines
intersecting in a $D_{n+1}$-configuration, so that $\dim H^2(F_{[2n-2,2]},\IQ)=n+1$.
Moreover, the ordering of the partitions of length 2 takes the form
$[2n-2,2]>[2n-4,4]>\ldots>[n,n]$. By Lemma \ref{lemma:RestrictionIsInjection} it suffices to
prove assertion 1 of the proposition for the partition $[n,n]$. The case $n=2$ is already covered,
and we proceed by induction. We work with the following model for a $2n$-dimensional symplectic
vector space with a nilpotent
endomorphism of type $[n,n]$: Let the action of $x$ on $V=\langle v_1,\ldots,v_n,w_1,\ldots,w_n\rangle$
be given by $xv_i=v_{i+1}$, $xw_i=w_{i+1}$ and $xv_n=xw_n=0$, and let the symplectic form be
fixed by $\omega(v_i,w_j)=(-1)^{i-1}\delta_{i+j,n+1}$ and $v_i\perp v_j$, $w_i\perp w_j$ for all
$i$, $j$. Consider now the map $\psi:F_{[n,n]}\to \IP(\Ker(x))=\IP^1$ sending a flag $[W]$ of
isotropic subspaces $W_1\subset\ldots\subset W_n$ to the line $W_1\subset \Ker(x)$. In order
to identify the fibre $\psi^{-1}(W_1)$ we need to determine the Jordan type of the induced
endomorphism $\overline x$ on $\overline V=W_1^\perp/W_1$.

Assume first that $n$ is odd. The kernel of $x$ is spanned by $v_n$ and $w_n$, and if $W_1=\langle
\alpha v_n+\beta w_n\rangle$, then $W_1^\perp$ is spanned by the elements
$\alpha v_i+\beta w_i$, $i=1,\ldots,n$ and $\gamma v_i+\delta w_i$, $i=2,\ldots,n$, where $\gamma,\delta$
are constants with $\alpha\delta-\beta\gamma\neq 0$. Then $\overline V$ is spanned by the classes
of $\alpha v_i+\beta w_i$, $i=1,\ldots,n-1$ and $\gamma v_i+\delta w_i$, $i=2,\ldots,n$. As the action
of $\overline x$ can be easily read off from this bases we see that $\overline x$ has Jordan type
$\overline d=[n-1,n-1]$. More precisely, this description shows that $\psi:F_{[n,n]}\to\IP^1$ is
a fibre bundle with fibres $F_{[n-1,n-1]}$. In particular, $R^2\psi_*\IQ$
is a local system on $\IP^{1,an}$, and by induction its rank is $n$. Since $\IP^1$ is simply connected,
this local system is trivial. Finally, all fibres have vanishing first cohomology groups and the Leray
spectral sequence $H^p(\IP^1, R^q\psi_*\IQ)\implies H^{p+q}(F_{[n,n]},\IQ)$ shows that
$H^2(F_{[n,n]},\IQ)\isom \IQ^{n+1}$.

Assume now that $n$ is even. We keep the notation of the previous paragraph. However, in the even case
the structure of the fibre $\psi^{-1}(\langle \alpha v_n+\beta w_n\rangle)$ does depend on the choice
of $W_1=\langle\alpha v_n+\beta w_n\rangle$. If $\alpha\neq 0\neq \beta$, then $W_1^\perp/W_1$ is generated
by $\alpha v_i+\beta w_i$, $i=2,\ldots,n-1$, and $\alpha v_i-\beta w_i$, $i=1,\ldots,n$. It follows that
$\psi^{-1}(U)\to U:=\IP^1\setminus\{[1:0],[0:1]\}$ is a fibre bundle (in the Zariski topology) with
fibres $F_{[n,n-2]}$. On the other hand, if $W_1=\langle v_n\rangle$, then $W_1^\perp/W_1$ is generated
by $v_i$, $i=1,\ldots,n-1$ and $w_i$, $i=2,\ldots,n$, so that $\psi^{-1}([1:0])\isom F_{[n-1,n-1]}$.
Similarly, one shows that $\psi^{-1}([0:1])\isom F_{[n-1,n-1]}$. By induction, we know that the second
Betti number both of $F_{[n-1,n-1]}$ and $F_{[n,n-2]}$ equals $n$. We claim that $R^2\psi_*\IQ$ is a
local system on $\IP^{1,an}$ of rank $n$; we can then finish the argument as in the odd case.

In order to prove the claim, let $\kw_1\subset \ko_{\IP^1}\tensor \Ker(x)\subset \ko_{\IP^1}\tensor \IC^{2n}$
denote the tautological
subbundle and let $\overline\kv=\kw^\perp_1/\kw_1$ denote the corresponding quotient bundle, endowed with
a nilpotent endomorphism $\overline x$. For any point $p\in\IP^1$ there are sections
$s_1,\ldots,s_{2n-2}$ in $\overline\kv$ defined over an open neighbourhood $U$ that form a standard
basis for the symplectic structure $\overline\omega$ on $\overline\kv$. The matrix coefficients
$\overline x(s_i)=\sum_j x_{ji}s_j$ define a classifying morphism $f:U\to \Liesp_{2n-2}$, and we
obtain a diagram with cartesian squares:
$$
\begin{array}{ccccc}
F_{[n,n]}&\supset&\psi^{-1}(U)&\lra& \LieSp_{2n-2}\times^{\bar B}\bar{\b}\\
\Big\downarrow&&\Big\downarrow&&\Big\downarrow\\
\IP^1&\supset&U&\lra&\Liesp_{2n-2}
\end{array}
$$
If $n\geq 4$, as we assume, the classifying morphism $f$ takes value in the closure of the subregular
nilpotent orbit. Let $\xi_1,\ldots,\xi_n$ denote a basis of the germ $(R^2\psi_*\IQ)_p=
H^2(\psi^{-1}(p),\IQ)\isom \IQ^n$. On an appropriate open neighbourhood $U'$ of $p$, this basis
forms a set of linearly independent vectors in $(R^2\psi_*\IQ)_{q}$ for all $q\in U'$ by Lemma
\ref{lemma:RestrictionIsInjection}. As the rank of the second Betti
number is constant, they form in fact a basis. This shows that $R^2\psi_*\IQ$ is indeed a local system.

2. Consider now the opposite case of a partition $d$ with at least $3$ parts. Any such partition is
dominated by $[2n-2,1,1]$, and it suffices to show that $\dim H^2(F_{[2n-2,1,1]},\IQ)=n$.
A model for a nilpotent symplectic endomorphism of Jordan type $[2n,1,1]$ is given as follows:
Let $V=\langle v_1,\ldots,v_{2n-2},u_1,u_2\rangle$ with $xv_i=v_{i+1}$ and $x v_{2n-2}=xu_1=xu_2=0$.
The symplectic structure is defined by $\omega(v_i,v_j)=(-1)^i \delta_{i+j,2n-1}$ and $\omega(u_1,u_2)=1$,
the other matrix entries of $\omega$ being $0$. In this case, $\psi:F_{[2n-2,1,1]}\to\IP(\Ker\,x)=\IP^2$
is an isomorphism over $\IP^2\setminus \{[v_{2n-2}]\}$ with $\psi^{-1}([v_{2n-2}])\isom F_{[2n-4,1,1]}$.
More precisely, the rational map $\psi^{-1}$ extends to an embedding of the blow-up $\Blowup_{[v_{2n-2}]}(\IP^2)$
into $F_{[2n-2,1,1]}$. By induction one obtains the following description: $F_{[2n-2,1,1]}$ is
the union $S_1\cup S_2\cup\ldots\cup S_{n-1}$ of $(n-1)$ Hirzebruch surfaces $S_i\isom \IF_1$, $i=1\ldots,n-1$,
where $S_k$ and $S_{k+1}$ are glued along a line which has self-intersection $+1$
in $S_k$ and self-intersection $-1$ in $S_{k+1}$. It is clear from this description that
$H^2(F_{[2n-2,1,1]},\IQ)=n$.
\end{proof}

It remains to exhibit the hypersurface in the Slodowy slice to a nil\-po\-tent orbit
in $\Liesp_{2n}$ of type $[2n-2,1,1]$. We will give matrices and formulae for $n=4$, the
general pattern can easily be derived from these data: The following matrices are nilpotent

\begin{scriptsize}
$$
x=\left(\begin{matrix}
0 & 1 & 0 & 0 & 0 & 0 & 0 & 0\\
0 & 0 & 2 & 0 & 0 & 0 & 0 & 0\\
0 & 0 & 0 & 3 & 0 & 0 & 0 & 0\\
0 & 0 & 0 & 0 & 4 & 0 & 0 & 0\\
0 & 0 & 0 & 0 & 0 & 5 & 0 & 0\\
0 & 0 & 0 & 0 & 0 & 0 & 0 & 0\\
0 & 0 & 0 & 0 & 0 & 0 & 0 & 0\\
0 & 0 & 0 & 0 & 0 & 0 & 0 & 0
\end{matrix}\right),\quad
y=\left(\begin{matrix}
0 & 0 & 0 & 0 & 0 & 0 & 0 & 0\\
5 & 0 & 0 & 0 & 0 & 0 & 0 & 0\\
0 & 4 & 0 & 0 & 0 & 0 & 0 & 0\\
0 & 0 & 3 & 0 & 0 & 0 & 0 & 0\\
0 & 0 & 0 & 2 & 0 & 0 & 0 & 0\\
0 & 0 & 0 & 0 & 1 & 0 & 0 & 0\\
0 & 0 & 0 & 0 & 0 & 0 & 0 & 0\\
0 & 0 & 0 & 0 & 0 & 0 & 0 & 0
\end{matrix}\right)
$$
\end{scriptsize}
and belong to the Lie algebra $\Liesp_{2n}$ with respect to the skew-symmetric form

\begin{scriptsize}
$$J=
\left(\begin{matrix}
0 & 0 & 0 & 0 & 0 & -1 & 0 & 0\\
0 & 0 & 0 & 0 & \frac{1}{5} & 0 & 0 & 0\\
0 & 0 & 0 & - \frac{1}{10} & 0 & 0 & 0 & 0\\
0 & 0 & \frac{1}{10} & 0 & 0 & 0 & 0 & 0\\
0 & - \frac{1}{5} & 0 & 0 & 0 & 0 & 0 & 0\\
1 & 0 & 0 & 0 & 0 & 0 & 0 & 0\\
0 & 0 & 0 & 0 & 0 & 0 & 0 & -1\\
0 & 0 & 0 & 0 & 0 & 0 & 1 & 0
\end{matrix}\right)$$\end{scriptsize}
The entries are inverses of binomial coefficients. $x$ has Jordan type $[2n-2,1,1]$,
and the Slodowy slice to the orbit of $x$ is parameterised by the matrix

\begin{scriptsize}
$$s:=\left(\begin{array}{cccccc|cc}
0 & 1 & 0 & 0 & 0 & 0 & 0 & 0\\
5\, t_1 & 0 & 2 & 0 & 0 & 0 & 0 & 0\\
0 & 4\, t_1 & 0 & 3 & 0 & 0 & 0 & 0\\
10\, t_2 & 0 & 3\, t_1 & 0 & 4 & 0 & 0 & 0\\
0 & 4\, t_2 & 0 & 2\, t_1 & 0 & 5 & 0 & 0\\
t_3 & 0 & t_2 & 0 & t_1 & 0 & b & a\\\hline
- a & 0 & 0 & 0 & 0 & 0 & y & - z\\
b & 0 & 0 & 0 & 0 & 0 & x & - y
\end{array}\right),$$
\end{scriptsize}
depending on parameters $a,b,x,y,z,t_1,\ldots,t_{n-1}$. The characteristic polynomial of $s$
has the form
$$\chi_s(\lambda)=(2n-3)! (a^2x+2aby+b^2z)+(\lambda^2+xz-y^2)\chi_{s'}(\lambda),$$
where $s'$ is upper left block of the matrix $s$. Solving the coefficients of the terms
$\lambda^{2n-2i}$, $i=1,\ldots,n$, recursively for the variables $t_1,\ldots,t_{n-1}$ finally
leaves the equation
$$f=(2n-3)! (a^2x+2aby+b^2z)-(xz-y^2)^{n}$$
Up to a rescaling of the coordinates, this is the formula given in Example \ref{ex:hypersurfaces1}.

\section{Lie algebras of type $B$}

In this section, we consider the Lie algebra $\g=\Lieso(2n+1)$ for $n\geq 2$.

\begin{proposition}\label{prop:ConditionStarForB}--- Let $d$ be the Jordan type of a non-regular
nilpotent orbit $\kO_d$ in $\Lieso_{2n+1}$, and let $F_d$ denote the isomorphism type of the fibre
$\pi^{-1}(x)$ over an element $x\in\kO_d$ under the Springer resolution $\pi:G\times^B\b\to\g$.
\begin{enumerate}
\item $\dim H^2(F_d,\IQ)=2n-1$ for $d=[2n-1,1,1]$, \ie the subregular orbit.
\item For all other $d$, $\dim H^2(F_d,\IQ)=n$.
\end{enumerate}
\end{proposition}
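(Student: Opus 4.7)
The proof follows the strategy of Proposition~\ref{prop:ConditionStarForC}. The lower bound $\dim H^2(F_d,\IQ) \geq n$ for every non-regular $d$ is immediate from Proposition~\ref{prop:Injectivity}, since $\dim H^2(T^*(G/B),\IQ) = \rk\g = n$, so it remains to establish the stated upper bounds.

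For the subregular case $d = [2n-1,1,1]$, the germ $(\SS_0,x)$ is the Kleinian surface singularity of type $A_{2n-1}$ (see \cite{Slodowy-LectureNotes}, Section~6.2); accordingly $F_{[2n-1,1,1]}$ is a chain of $2n-1$ rational curves forming the exceptional divisor of its minimal resolution, and $\dim H^2(F_{[2n-1,1,1]},\IQ) = 2n-1$. This proves~(1).

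For~(2), by Lemma~\ref{lemma:RestrictionIsInjection} together with the Gerstenhaber-Hesselink theorem it suffices to prove $\dim H^2(F_d,\IQ) \leq n$ for the maximal non-subregular, non-regular partitions in the dominance order on admissible type-$B$ partitions of $2n+1$. The partition $[2n-2,2,1]$ which would cover $[2n-1,1,1]$ in the unrestricted dominance order is excluded, since its even part $2$ has odd multiplicity; a direct check then identifies $[2n-3,3,1]$ (for $n\geq 3$) as the unique maximal admissible non-subregular, non-regular partition, with the low-rank case $n=2$ reducing to the single orbit $[2,2,1]$ and handled by direct inspection. The core problem is thus to verify $\dim H^2(F_{[2n-3,3,1]},\IQ) = n$ for $n\geq 3$.

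For this last step I would mimic the fibration argument of Proposition~\ref{prop:ConditionStarForC}: fix a nilpotent $x \in \Lieso_{2n+1}$ of Jordan type $[2n-3,3,1]$ and study the morphism $\psi : F_{[2n-3,3,1]} \to \IP(\Ker x)$ sending an isotropic flag $W_1\subset\ldots\subset W_n$ to its minimal subspace $W_1$. With the standard Jordan-block model the orthogonal form restricts to a rank-$1$ form on $\Ker x \cong \IC^3$ (the top vectors of the $(2n-3)$- and $3$-blocks being isotropic and mutually orthogonal, while the $1$-block contributes an anisotropic direction), so the image of $\psi$ is a projective line $Q$; the fibre over $[v]\in Q$ is identified with the Springer fibre in $\Lieso_{2n-1}$ for the Jordan type $\bar d$ induced on $v^\perp / v$, opening the door to an induction on $n$. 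The main obstacle is that, in contrast to the type-$C$ situation, the generic value of $\bar d$ comes out to be $[2n-3,1,1]$---the \emph{subregular} type in $\Lieso_{2n-1}$---whose Springer fibre has second Betti number $2n-3$, far larger than the target $n$, so that a naive Leray spectral sequence argument for $\psi$ overshoots. One must therefore either track the constructible sheaf $R^2\psi_*\IQ$ carefully, analysing the jump locus where $\bar d$ degenerates to a smaller admissible type and the resulting cancellations in the spectral sequence, or else bypass $\psi$ entirely and compute $\dim H^2(F_{[2n-3,3,1]},\IQ)$ from Spaltenstein's combinatorial description of the irreducible components of $F_{[2n-3,3,1]}$ and their pairwise intersections---directly analogous to the chain-of-Hirzebruch-surfaces description of $F_{[2n-2,1,1]}$ employed in the proof of Proposition~\ref{prop:ConditionStarForC}.
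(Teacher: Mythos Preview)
Your setup coincides with the paper's: reduce to $d=[2n-3,3,1]$, fibre over the isotropic lines in $\Ker x$ (a $\IP^1$), and observe that the generic fibre is the subregular Springer fibre $F_{[2n-3,1,1]}$ in $\Lieso_{2n-1}$ with second Betti number $2n-3$. But you then stop, labelling the mismatch an ``obstacle'' and listing two possible routes without executing either. The paper completes your first option in two lines, and no spectral-sequence ``cancellations'' are involved.

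The missing observation is this. Over the single point $[1:0]\in\IP^1$ where $W_1$ is spanned by the top vector of the length-$(2n-3)$ Jordan block, the induced type on $W_1^\perp/W_1$ is not $[2n-3,1,1]$ but $[2n-5,3,1]$; by the inductive hypothesis in $\Lieso_{2n-1}$ this special fibre has $b_2=n-1$. Thus $R^2\psi_*\IQ$ is a constant sheaf of rank $2n-3$ on $\IA^1=\IP^1\setminus\{[1:0]\}$ with stalk $\IQ^{n-1}$ at the remaining point, the cospecialisation map $\IQ^{n-1}\hookrightarrow\IQ^{2n-3}$ being injective by Lemma~\ref{lemma:RestrictionIsInjection}. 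A global section of such a sheaf is determined by its germ at $[1:0]$ (if it vanishes there it vanishes on a neighbourhood, hence on all of $\IA^1$ since the sheaf is constant there), so $\dim H^0(\IP^1,R^2\psi_*\IQ)=n-1$. Since Springer fibres have no odd cohomology, $R^1\psi_*\IQ=0$, and $R^0\psi_*\IQ=\IQ$; Leray then gives $b_2(F_{[2n-3,3,1]})=(n-1)+0+1=n$ with no nontrivial differentials. The large generic rank of $R^2\psi_*\IQ$ simply never reaches $H^0$ because the single small stalk pins global sections down; the induction on $n$ runs through the \emph{special} fibre $[2n-5,3,1]$, not the generic one.
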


The proposition implies part $(B_n)$ of Theorem \ref{th:mainTheoremNonSimplyLaced}. As in the
case of Lie algebras of type $C$, we may identify points in $F_d$ with flags $W_1\subset
\ldots\subset W_n\subset\IC^{2n+1}$ of isotropic subspaces. We use the same techniques and
notations as in the previous section. As the proof proceeds along similar lines we only indicate
the differences.

\begin{proof} As $\Lieso_5\isom\Liesp_{4}$, the case $n=2$ is already covered
by Proposition \ref{prop:ConditionStarForC}. We may therefore assume that $n\geq 3$ and proceed
by induction. 


For the subregular orbit the transversal slice $S_0=S\cap N$ is a surface singularity of type
$A_{2n-1}$ (cf. \cite{Slodowy-Utrecht}, Section 1.8) so that the fibre $F_{[2n-1,1,1]}$ is the 
union of $2n-1$ copies of $\IP^1$ and its second Betti number is $2n-1$.
By Lemma \ref{lemma:RestrictionIsInjection} it suffices to prove the assertion for the next 
smaller orbit of type $[2n-3,3,1]$.

Let $v_1,\ldots,v_{2n-3}$, $v_1',v_2',v_3'$, $v''$ be a basis of $V=\IC^{2n+1}$, let the action
of $x$ be given by $xv_i=v_{i+1}$, $xv_i'=v'_{i+1}$ and $xv_{2n-3}=xv_3'=xv''=0$, and let
the quadratic form on $V$ be given by $b(v_i,v_j)=(-1)^{i-1} \delta_{i+j,2n-2}$, $b(v_i',v_j')=
(-1)^{i-1}\delta_{i+j,4}$ and $b(v'',v'')=1$, all other matrix coefficients of $b$ being $0$.
Then $\Ker(x)=\langle v_{2n-3}, v'_3,v''\rangle$. A vector $\alpha v_{2n-3}+\beta v_3'+\gamma v''$
is isotropic if and only if $\gamma=0$. Thus sending a flag to its one-dimensional part defines
a morphism $F_{[2n-3,3,1]}\to \IP\langle v_{2n-3},v'_3\rangle=\IP^1$. Let $W_1=\langle \alpha
v_{2n-3}+\beta v_3'\rangle$. If $[\alpha:\beta]=[1:0]$, then the induced endomorphism $\overline x$
on $W_1^\perp/W_1$ is of type $[2n-5,3,1]$. If on the other hand $\beta\neq 0$, then the
induced endomorphism is of type $[2n-3,1,1]$. It follows by induction that
$R^2\psi_*(\IQ)_{[1:0]}\isom \IQ^{n-1}$ and that the restriction of $R^2\psi_*(\IQ)$ to
$\IP^1\setminus\{[1:0]\}$ is a trivial local system of rank $2n-3$. Taking global sections, we
find that $\dim H^0(R^2\psi_*\IQ)=n-1$, and the Leray spectral sequence gives the desired
value $b_2(F_{[2n-3,3,1]})=n$.
\end{proof}

\section{The Lie algebra of type $G_2$}

The Lie algebra $\g_2$ has five nilpotent orbits: the regular, the subregular, the `subsubregular',
the minimal and the trivial orbit of dimensions $12$, $10$, $8$, $6$ and $0$, respectively. The
orbit closures are linearly ordered by inclusion.

\begin{proposition}\label{prop:ConditionStarForG2}--- Let $F_x$ denote the fibre of the
Springer resolution for a nilpotent element $x$ in $\g_2$. Then $\dim H^2(F_x,\IQ)=4,3$ and $2$,
if $x$ belongs to the subregular, the subsubregular and the minimal orbit, respectively.
\end{proposition}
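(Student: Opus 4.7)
The plan is to treat the three orbits separately, exploiting the total chain of nilpotent orbit closures in $\g_2$,
$$\{0\}\subset\overline{\kO_{min}}\subset\overline{\kO_{8}}\subset\overline{\kO_{10}}\subset N,$$
together with the uniform lower bound $\dim H^2(F_x,\IQ)\geq\rk\g_2=2$ supplied by Proposition~\ref{prop:Injectivity}. The work in each case lies in establishing the matching upper bound.

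For the subregular orbit, the assertion $\dim H^2(F_x,\IQ)=4$ should follow from the classical Brieskorn-Slodowy description of subregular slices in folded simple Lie algebras (\cite{Slodowy-LectureNotes}, Ch.~8): the slice singularity $(S_0,x)$ in $\g_2$ is analytically a Kleinian $D_4$ surface singularity, reflecting the realisation of $\g_2$ as the triality fixed subalgebra of $\Lieso_8$. The Springer resolution restricts to the (necessarily minimal) resolution of this surface singularity, whose exceptional fibre is the $D_4$ configuration of four $\IP^1$'s. Counting components yields the value $4$ directly.

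For the subsubregular orbit I would appeal to the analytic isomorphism of the slice singularity $(S_0,x)$ in $\g_2$ with the slice singularity $(S_0',x')$ to the nilpotent orbit of Jordan type $[4,1,1]$ in $\Liesp_6$, to be established in Section~\ref{section:DualPairs}. Both are four-dimensional symplectic singularities admitting crepant resolutions coming from their respective Springer maps. Since any two symplectic resolutions of a given symplectic singularity are connected by a sequence of Mukai flops and so have exceptional fibres with identical Betti numbers, one obtains $\dim H^2(F_x,\IQ) = \dim H^2(F_{x'},\IQ)$. Proposition~\ref{prop:ConditionStarForC}(2), applied to the partition $[4,1,1]$ in $\Liesp_6$ (\ie with $n=3$), then gives the value $3$.

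For the minimal orbit only the upper bound $\dim H^2(F_x,\IQ) \leq 2$ remains to be established, and this is the crux of the proof. Writing $x=e_\theta$ for the highest-root vector ($\theta=3\alpha_1+2\alpha_2$), the Springer fibre parameterises Borel subalgebras whose nilradical contains $e_\theta$. Projecting each such Borel to the minimal parabolic $P\supset\mathfrak{b}$ that stabilises the line $\IC\cdot e_\theta$ should realise $F_x$ as a tower of $\IP^1$-fibrations over a partial flag variety of $\g_2$, and a Leray spectral sequence argument parallel to the one used in the proof of Proposition~\ref{prop:ConditionStarForC} should then produce the required equality. This explicit $\g_2$-geometric computation is the main obstacle, since, in contrast with the subregular and subsubregular cases, it cannot be reduced to a problem about a classical Lie algebra.
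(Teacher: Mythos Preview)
Your treatment of the subregular orbit matches the paper's: both invoke Slodowy's identification of the slice with a $D_4$ surface singularity, so the Springer fibre is the $D_4$ tree of four lines and $b_2=4$.

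For the subsubregular orbit you appeal to Section~\ref{section:DualPairs} for the isomorphism $(S_0,x)\cong(S_0',x')$ with the $[4,1,1]$ slice in $\Liesp_6$. That section, however, only establishes the dual-pair correspondence between $\Lieso_{2n}$ and $\Liesp_{2n-2}$; it says nothing about $\g_2$. In the paper, the isomorphism you want is asserted precisely inside the proof of this very proposition (by direct computation of the hypersurface equations), so invoking it from elsewhere is circular unless you supply an independent argument. Your subsequent step---that two symplectic resolutions of isomorphic germs have central fibres with equal Betti numbers---is correct in principle but also needs a reference or a line of justification.

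The serious gap is the minimal orbit. You propose to fibre $F_x$ over a partial flag variety via the parabolic stabilising $\IC e_\theta$, then run a Leray argument, but you stop short of carrying this out and flag it yourself as ``the main obstacle''. Note also that the monotonicity of Lemma~\ref{lemma:RestrictionIsInjection} only gives $b_2(F_{min})\le b_2(F_{subsub})=3$, which together with the lower bound $2$ still leaves two possible values; so the minimal orbit genuinely requires a separate computation. The paper takes a completely different, concrete route: it writes the six-dimensional slice $S_0$ explicitly as a hypersurface $\{f=0\}\subset\IC^7$, blows up along the reduced singular locus, checks (with computer assistance) that the result has only transversal $A_1$ singularities along a smooth centre, blows up once more to obtain a symplectic resolution, and reads off a cell decomposition $\IA^0\sqcup 2\IA^1\sqcup 2\IA^2\sqcup\IA^3$ of the central fibre, giving $b_2=2$. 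Your fibration idea may well be viable, but as written it is an outline, not a proof.
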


In the course of the proof we will also establish the formula for the polynomial $f$ in 
Example \ref{ex:hypersurfaces2}.

\begin{proof} Similarly to the previous section, the transversal slice to the subregular orbit
is a surface singularity, this time of type $D_4$ (cf. \cite{Slodowy-Utrecht}, Section 1.8), so 
that the Springer fibre is the union of $4$ copies of $\IP^1$ and its second Betti number is $4$. 

The slice to the minimal orbit
in the nilpotent cone will be shown to be a $6$-dimensional symplectic hypersurface. We will
determine the second Betti number of the fibre via an explicit resolution of the singularities.

In order to facilitate calculation in the Lie algebra $\g_2$ we will use the $\IZ/3$-grading of 
$\g_2$ with graded pieces $\g_2^{(0)}=\Liesl_3$, $\g_2^{(1)}=\IC^3$, and $\g_2^{(2)}=\IC^{3*}$, 
where $\IC^3$ and $\IC^{3*}$ denote the standard representation of $\Liesl_3$ and its dual (see 
for example \cite{FultonHarris}, 22.2). Representing the latter as column and row vectors, 
respectively, we may write a general element of $\g_2$ in block form as
$$
\left(\begin{array}{c|c}
A&v\\\hline w&0
      \end{array}\right)
=
\left(\begin{array}{ccc|c}
h_1&a_{12}&a_{13}&v_1\\
a_{21}&h_2-h_1&a_{23}&v_2\\
a_{31}&a_{32}&-h_2&v_3\\
\hline
w_1&w_2&w_3&0
\end{array}\right)
$$
The Lie bracket is given by the canonical maps $\Liesl_3\times \Liesl_3\to \Liesl_3$,
$\Liesl_3\times\IC^3\to\IC^3$, $\Liesl_3\times \IC^{3*}\to \IC^{3*}$, 
$\IC^3\times \IC^3\xra{\wedge} \IC^{3*}$, $\IC^{3*}\times\IC^{3*}\xra{\wedge}\IC^3$ and
$$\IC^3\times \IC^{3*}\to \Liesl_3, (v,w)\mapsto \frac34 \left(vw- \frac13 (wv) I\right),$$
where the factor $\frac34$ is thrown so as to make the Jacobi identity hold. In this notation,
the representation $\rho:\g_2\to \Lieso_7$ can be written as
$$\rho:\left(\begin{array}{c|c}
A&v\\\hline w&0
      \end{array}\right) \mapsto \left(
\begin{array}{ccc}
A&\frac1{\sqrt 2}v& M(w^t)\\

-\frac1{\sqrt 2}w&0&-\frac1{\sqrt{2}}v^t\\

M(v)&\frac1{\sqrt 2}w^t&-A^t
\end{array}\right)$$
where $M(v)$ is the $3\times 3$-matrix of the linear map $u\mapsto v\times u$, the vector cross product.
The two components of the coadjoint quotient map $\chi=(\chi_2,\chi_6):\g_2\to \IC^2$ are the
coefficients of $t^5$ and $t$, respectively, in the characteristic polynomial $u\mapsto \det(tI-\rho(u))$,
and are of degree $2$ and $6$, respectively. They can be expressed in terms of $\Liesl_3$-invariants as follows:
\begin{eqnarray*}
 \chi_2&=&\frac32 wv-\tr(A^2),\\
\chi_6&=&- \det(A)^2 + \frac32\det(A)(wAv) + \frac3{16}(wAv)^2+\frac14\tr(A^2)^2 (wv)\\
&&+ \frac14\tr(A^2)(wv)^2 - \frac12(wA^2v)\tr(A^2) + \frac1{16}(wv)^3 - \frac34(wA^2v)(wv)\\
&& + \frac12\det(v|Av|A^2v) - \frac12\det(w^t|(wA)^t|(wA^2)^t)
\end{eqnarray*}
Consider now the $\Liesl_2$-triplet
$$x=\left(\begin{array}{cccc}0&1&0&0\\0&0&0&0\\0&0&0&0\\0&0&0&0\end{array}\right),\quad
h=\left(\begin{array}{cccc}1&0&0&0\\0&-1&0&0\\0&0&0&0\\0&0&0&0\end{array}\right),\quad
y=\left(\begin{array}{cccc}0&0&0&0\\1&0&0&0\\0&0&0&0\\0&0&0&0\end{array}\right)
$$
The Slodowy slice to $x$ consists of all elements of the form
$$\xi:=\left(\begin{array}{ccc|c}
\tfrac12 b& 1& 0& 0\\ u&\tfrac12 b& p& 2q\\s& 0& -b& a\\
\hline 2r& 0& c& 0
\end{array}\right)
$$
The coordinates $a,b$ and $c$ have degree $2$, whereas $p,q,r,s$ have degree $3$ and $u$ has
degree $4$. Calculation shows
$$\chi_2(\xi)=-2(u-\tfrac34(ac-b^2)).$$
In particular, the derivative of $\chi|_S$ has rank $1$. Modulo $\chi_2$, we can express $u$ in terms
of the coordinates $a,b,c$. This simplifies considerably the expression for $\chi_6$. In fact,
\begin{eqnarray*}\chi_6(\xi)&=&t_1t_3-t_2^2-\tfrac12 (t_4^2a+2t_4t_5b+t_5^2c)\\
&&-\tfrac12(at_3-2bt_2+ct_1)\chi_2(\xi)+\tfrac14(ac-b^2) \chi_2(\xi)^2,
\end{eqnarray*}
where
$$\begin{array}{rclcrcl}
t_1&=&a(ac-b^2)+2(q^2-rp),&\quad&z_1&=&as-2br+cq,\\[1ex]
t_2&=&b(ac-b^2)+(rq-ps),&&z_2&=&ar-2bq+cp,\\[1ex]
t_3&=&c(ac-b^2)+2(r^2-qs).\\
\end{array}
$$
Note that $\chi^{-1}(0)\subset\IC^8$ is isomorphic to the hypersurface $X=\{f=0\}\subset\IC^7$
with $f=z_1^2a-2z_1z_2b+z_2^2c+2(t_2^2-t_1t_3)$. The polynomials $t_1$, $t_2$, $t_3$, $z_1$, $z_2$
are a minimal set of equations for the reduced singular locus $\Sigma$ of $X$. 

Let $\pi':X'\to X$ denote the blow-up of $X$ along $\Sigma$. As $X$ and $\Sigma$ are defined
by explicitly given polynomials, the calculation can be done with a computer algebra
system. We used the program SINGULAR \cite{Singular}. Inspection of the five affine coordinate charts
for $X'$ shows that $\pi$ is semismall, that the singular locus $\Sigma'$ of $X'$ is irreducible and 
smooth, and that $X'$ has transversal $A_1$-singularities along $\Sigma'$. A second blow-up 
$\pi'':X''\to X'$ along $\Sigma'$ finally yields a symplectic resolution of $X$. Let 
$F':=(\pi')^{-1}(0)$. Then $F'$ is the hypersurface in $\IP^4=\IP(x_0,x_1,x_2,y_0,y_1)$ given by 
the equation $x_0x_2-x_1^2$. The intersection of $\Sigma'$ with $F'$ equals set-theoretically the 
singular locus of $F'$. It is however defined by the equations $x_0+2x_1y_1+x_2y_1^2$, $x_2(x_1+2y_1)$, 
$x_2^3$ on the affine chart $U_0=\{y_0=1\}$, and by analogous equations on the affine chart 
$U_1=\{y_1=1\}$. Note that the singular locus of $F'$ is covered by $U_1\cup U_2$. For an appropriate 
coordinate change, we
get $F'\cap U_1=\IA^1\times Z$ with $Z=\{u_1u_3-u_2^2\}\subset\IA^3$, and where the center of
the blow-up is defined by $u_1,u_2u_3, u_3^3$. The blow-up of $Z$ along this ideal has a cell
decomposition $\IA^2\sqcup \IA^1$. As the part not covered by the two charts is isomorphic to a
smooth quadric in $\IP^3$, we obtain in fact a cell decomposition
of $F'':=(\pi''\circ\pi')^{-1}(0)=(\pi'')^{-1}(F')$:
$$F''=(\IA^0\sqcup \IA^1)\sqcup (\IA^1\sqcup \IA^0) \times (\IA^2\sqcup \IA^1)= \IA^0\sqcup 2 \IA^1
\sqcup 2\IA^2 \sqcup \IA^3.$$
In particular, $H^2(F'',\IZ)=\IZ^2$. As $F''$ and the fibre $F_x$ have the same Betti numbers, this proves
the proposition for the minimal orbit.

Similar techniques can be used to treat the subsubregular orbit. It turns out that the slice is
again a hypersurface, which is in fact isomorphic to the hypersurface slice to the orbit of
type $[4,1,1]$ for $\Liesp_6$. In particular, the Betti number of the fibre is 3 in this case.
\end{proof}

\section{The Lie algebra of type $F_4$}

\begin{proposition}\label{prop:F4Case}--- Let $F_x$ denote the Springer fibre of a nilpotent
element $x$ in $\gothf_4$. Then $\dim H^2(F_x,\IQ)=0,6$ or $4$ if $x$ belongs to the
regular, the subregular or any other orbit, respectively.
\end{proposition}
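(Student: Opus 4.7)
The plan follows the template of the preceding sections. The lower bound $\dim H^2(F_x,\IQ)\geq \rk\gothf_4 = 4$ holds for every non-regular nilpotent $x$ by Proposition \ref{prop:Injectivity}. For the subregular orbit I would invoke Slodowy's theorem (\cite{Slodowy-Utrecht}, Section 1.8) that the transverse slice $\So$ at a subregular element of $\gothf_4$ is a Kleinian surface singularity of type $E_6$---a reflection of the fact that $\gothf_4$ is obtained from $\mathfrak{e}_6$ by folding along its order-two Dynkin diagram automorphism. Its minimal resolution, which by diagram \eqref{eq:SlodowySliceCD} coincides with the central fibre $\SS_{B,0}$, consists of six projective lines configured as the $E_6$ Dynkin diagram, so $\dim H^2(F_x,\IQ)=6$.

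For any non-regular nilpotent $x$ that is not subregular, Lemma \ref{lemma:RestrictionIsInjection} makes $\dim H^2(F_x,\IQ)$ monotone non-increasing as one moves deeper into the orbit closure order. Since $F_4(a_2)$ is the unique orbit covered by the subregular orbit in the Hasse diagram of nilpotent orbits of $\gothf_4$, it suffices to establish $b_2(F_y)\leq 4$ for a single representative $y\in F_4(a_2)$; combined with the lower bound, the monotonicity then forces $b_2(F_x)=4$ for every non-subregular, non-regular orbit.

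The plan for the critical case $y\in F_4(a_2)$ is to mimic the computational strategy of Proposition \ref{prop:ConditionStarForG2}. Fix a Jacobson-Morozov triple $(y,h,z)$ with $y\in F_4(a_2)$, parametrise the Slodowy slice $\SS = y+\Ker(\ad z)$ in explicit coordinates, and write down defining equations for $\So=\SS\cap\NCone$ and for its reduced singular locus. Then, using a computer algebra system such as SINGULAR, perform an iterated sequence of blow-ups along the successive singular loci until a symplectic resolution $\widetilde{\So}\to\So$ is obtained. Inspection of the affine coordinate charts should yield an affine cell decomposition of the central fibre in the spirit of the one constructed for the minimal orbit of $\g_2$, from which $b_2(F_y)=4$ can be read off directly.

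The main obstacle is precisely this explicit computation. The Lie algebra $\gothf_4$ is $52$-dimensional and does not admit the compact $4\times 4$ block presentation that made the $\g_2$ calculation tractable; obtaining manageable equations for $\So$ and keeping the blow-up sequence under control will be considerably heavier than in the $\g_2$ case. A convenient realisation---perhaps an identification of $\So$ with the slice to a related orbit in a classical Lie algebra, in the spirit of the coincidence between the subsubregular slice of $\g_2$ and the slice to the $[4,1,1]$-orbit in $\Liesp_6$ noted above---would substantially simplify matters and is the natural first thing to look for.
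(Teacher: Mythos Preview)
Your reduction is exactly right and matches the paper: the subregular Springer fibre is an $E_6$-tree of lines giving $b_2=6$; monotonicity from Lemma \ref{lemma:RestrictionIsInjection} together with Spaltenstein's tables reduces everything to the single orbit $F_4(a_2)$ of dimension $44$; and the lower bound from Proposition \ref{prop:Injectivity} means one only needs $b_2(F_y)\leq 4$ there.

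For that critical step, however, the paper does \emph{not} resolve the slice explicitly. It invokes instead the algorithm of De Concini, Lusztig and Procesi \cite{DLP}, which is tailor-made for \emph{distinguished} nilpotent elements---and the subsubregular element of $\gothf_4$ is distinguished. The DLP method partitions the Springer fibre $F$ into locally closed pieces $F_U$, each a vector bundle over a smooth subvariety $X_U$ of the flag variety $\kf$ of the reductive Lie algebra $\gothf_4(0)$, indexed by the $\gothb_0$-invariant subspaces $U\subset\gothf_4(2)$. Here the semisimple part of $\gothf_4(0)$ is $\Liesl_2\oplus\Liesl_2$, so $\kf\cong\IP^1\times\IP^1$; only $U=\gothf_4(2)$ and the two $\gothb_0$-invariant hyperplanes contribute to $H^2$, and a short analysis of the root structure gives $b_2(F)=2+1+1=4$.

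Your blow-up approach would in principle work---$\So$ is only four-dimensional here---but the DLP route sidesteps the computational burden you correctly anticipate, replacing it with elementary representation theory of $\Liesl_2\times\Liesl_2$ on the eight-dimensional space $\gothf_4(2)$. It is precisely the structural shortcut you were groping for in your final paragraph.
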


For the discussion of the Lie algebra $\gothf_4$ we follow a path different from the
other Lie algebras. In \cite{DLP}, De Concini, Lusztig and Procesi describe a general
method of how to construct a partition of the Springer fibre of a nilpotent element into
locally closed subvarieties. In the case of classical Lie algebras, this partition
actually yields a cell decomposition. For the exceptional Lie algebras the situation
is more complicated. Fortunately, the situation simplifies when the nilpotent element
in question is {\sl distinguished} in the sense of Bala and Carter \cite{BalaCarter}.
This is the case for a nilpotent element in the subsubregular orbit (= 3-rd largest 
nilpotent orbit) of the Lie algebra $\gothf_4$.
We outline the method of \cite{DLP}:

Let $x\in\gothg$ be a nilpotent element in a simple Lie algebra. By the Jacobson-Morozov
theorem, there are elements $h,y\in\gothg$ such that $x,h,y$ form a standard
$\Liesl_2$-triplet in $\gothg$. One can choose a Cartan subalgebra $\gothh\subset\gothg$
containing $h$ and a root basis $\Delta=\{\alpha_i\}_{i=1,\ldots\ell}$ such that
$\alpha_i(h)\geq 0$ for all $i=1,\ldots,\ell$. In fact, by a result of Dynkin, $\alpha_i(h)\in
\{0,1,2\}$. Thus one may associate to $x$ a weighted Dynkin diagram, where the node corresponding
to the root $\alpha_i$ is labelled by $\alpha_i(h)$. Following Dynkin, associating to $x$
its weighted Dynkin diagram gives an injective map from the set of conjugacy classes of
nilpotent elements in $\gothg$ to the set of Dynkin diagrams labelled with numbers $0$, $1$, or $2$.
For instance, the unique nilpotent orbit in $\gothf_4$ of dimension 44, to which we will
refer as the subsubregular orbit in the following, belongs to the weighted Dynkin diagram
\unitlength=1mm
\begin{equation}\label{eq:F4DynkinDiagram}
\begin{picture}(40,0)
\put(10,0){
\put(2.5,1.3){\line(1,0){6.5}}
\put(2.5,0.7){\line(1,0){6.5}}
\put(4.5,0){$>$}
}
\put(10,0){$2$}
\put(20,0){$0$}
\put(30,0){$2$.}
\put(0,0){$0$}
\put(0,0){\put(2.5,1){\line(1,0){6.5}}}
\put(20,0){\put(2.5,1){\line(1,0){6.5}}}
\end{picture}
\end{equation}
Let $\gothg=\bigoplus_i\gothg(i)$ be the weight decomposition for the action of
$h$ on $\gothg$, i.~e.\ $\gothg(i)=\{v\in\gothg\;|\; [h,v]=iv\}$. Then $\gothg(0)$ is a reductive
subalgebra of $\gothg$, and for every $i$, the homogeneous component $\gothg(i)$ is a natural
representation of $\gothg(0)$. Moreover, by construction $x\in\gothg(2)$, and the map
$\ad(x):\gothg(0)\to \gothg(2)$ is surjective. The element $x$ is distinguished in the sense of
Bala and Carter if and only if this map is also bijective.

For the proof of the proposition, we need to understand the Lie algebra structure of $\gothf_4(0)$
and the structure of $\gothf_4(2)$ as an $\gothf_4(0)$-representation for the case of the
subsubregular
orbit in $\gothf_4$. After removal of all nodes with nonzero labels the Dynkin diagram
\eqref{eq:F4DynkinDiagram} decomposes into two $A_1$-diagrams. Let $\Liesl_2(\alpha_1)$ and
$\Liesl_2(\alpha_3)$ denote the corresponding Lie subalgebras. Then
$$\gothf_4(0)=(\Liesl_2(\alpha_1)\oplus\Liesl_2(\alpha_3))+\gothh.$$
Let $(abcd)$ denote the root space in $\gothf_4$ corresponding to the root
$a\alpha_1+b\alpha_2+c\alpha_3+d\alpha_4$. Then $\gothf_4(2)$ is the direct sum of all root spaces
$(abcd)$ with $0\cdot a+2\cdot b+0\cdot c+2\cdot d=2$. Using the explicit list in
\cite[planche VIII]{Bourbaki}, it is not difficult to see that $\gothf_4(2)$ is 8-dimensional and is
spanned by the following spaces:
$$
\begin{array}{ccc}
(0001)&\to&(0011)
\end{array}
\quad
\begin{array}{ccccc}
(0100)&\to&(0110)&\to&(0120)\\
\downarrow&&\downarrow&&\downarrow\\
(1100)&\to&(1110)&\to&(1120)
\end{array}
$$
Moreover, in this diagram, horizontal and vertical arrows denote the action of $(0010)\subset
\Liesl_2(\alpha_3)$ and $(1000)\subset\Liesl_2(\alpha_1)$, respectively. From this we see that
$$\gothf_4(2)=V_3\oplus \big(V_1\tensor S^2(V_3)\big),$$
where $V_1$ and $V_3$ denote the 2-dimensional irreducible representations of $\Liesl_2(\alpha_1)$
and $\Liesl_2(\alpha_3)$, respectively.

\begin{proof}[Proof of Proposition \ref{prop:F4Case}] According to results of Spaltenstein
\cite[table on page 250]{Spaltenstein}, all nilpotent orbits in $\gothf_4$ that are neither
regular or subregular are contained in the closure of the unique orbit of dimension 44.
It is well-known that the second Betti number of the Springer fibre for the subregular orbit is 6,
as the Springer fibre itself is an $E_6$-tree of projective lines. It therefore suffices to
show that the second Betti number of the Springer $F$ fibre for the subsubregular orbit is 4.
The same then holds for all smaller nilpotent orbits by Lemma \ref{lemma:RestrictionIsInjection}.

We may take a general element $x\in \gothf_4(2)=V_3\oplus \big(V_1\tensor S^2(V_3)\big)$ as a representative
of the subsubregular orbit. Since $x$ is distinguished, the algorithm of De Concini, Lusztig and
Procesi yields a decomposition $F=\bigcup_U F_U$ into locally closed subvarieties $F_U$, each of which is
a vector bundle $F_U\to X_U$ over a smooth subvariety $X_U\subset \kf$, the flag variety associated
to the reductive Lie algebra $\gothf_4(0)$. The index $U$ runs through the set of all linear subspaces
of $\gothf_4(2)$ that are invariant under a fixed chosen Borel subalgebra $\gothb_0\subset\gothf_4(0)$.
Note that $\kf\isom \IP(V_1)\times\IP(V_3)=\IP^1\times\IP^1$.

For a given $\gothb_0$-invariant subspace $U$, the manifolds $X_U$ are defined as follows: A point
in $\kf$ represented by a Borel subalgebra $\gothb\subset\gothf_4(0)$ is contained in $X_U$ if and
only if $x\in[\gothb,U]$. It is shown in \cite{DLP} that all $X_U$ are smooth projective varieties.
Moreover, $\codim_{\kf}(X_U)=\dim(\gothf_4(2)/U)$ and $\dim(F_U)=\dim(\kf)=2$ for all $U$. In particular,
$F_U$ cannot contribute to the second Betti number of $F$ unless $\codim_{\kf}(X_U)\leq 1$, or equivalently,
unless $U$ equals $\gothf_4(2)$ or a $\gothb_0$-invariant hyperplane therein. More precisely,
$$\dim H^2(F;\IQ)= \dim H_2( \IP^1\times\IP^1;\IQ)+\sum_{\codim U=1} \dim H_0(X_U;\IQ).$$
It remains to determine all $\gothb_0$-invariant hyperplanes $U$ in $\gothf_4(2)$ and for each $U$
the number of connected components of $X_U\subset\IP^1\times\IP^1$.

In fact, there are exactly two $\gothb_0$-invariant hyperplanes $U_1$ and $U_2$ given as follows.
If $W_1\subset V_1$ and $W_3\subset V_3$ denote the unique $\gothb_0$-invariant lines, then
$$U_1=\ker\Big(\gothf_4(2)\to V_3\to V_3/W_3\Big)$$
and
$$U_2=\ker\Big(\gothf_4(2)\to V_1\tensor S^2V_3\to V_1/W_1\tensor S^2(V_3/W_3\Big).$$
The corresponding manifolds $X_{U_1},X_{U_2}\subset \IP^1\times\IP^1$ are the zero-sets of
sections in the line bundles $\ko(0,1)$ and $\ko(1,2)$, respectively, and hence connected. This shows
that $\dim H^2(F;\IQ)=2+1+1=4$.
\end{proof}

\section{Dual pairs and Slodowy slices}\label{section:DualPairs} 
  
When $\g$ is of type $B_n$, $C_n$, $F_4$ or $G_2$, the Slodowy slice for the subregular orbit gives a   
Poisson deformation of a surface singularity of type $A_{2n-1}$, $D_{n+1}$, $E_6$ or $D_4$ respectively. 
They are not universal. On the other hand, we also have Poisson deformations of these surface singularities 
in the Lie algebras of type $A_{2n-1}$, $D_{n+1}$, $E_6$ and $D_4$. They turn out to be universal. 
It would be quite natural to expect similar phenomena for all slices listed in Theorem 1.2.   
In this section we shall consider the Poisson deformation of the (complex analytic) germ $(\So, x)$ 
instead of $\So$. Theorem \ref{th:mainTheoremSimplyLaced} holds true if we replace $S$ (resp. $\So$) by 
$(S, x)$ (resp. ($\So, x)$). One can prove the following.  

\begin{proposition}\label{prop:Dual pairs}--- Let  $\varphi_{S_1}: S_1\to {\IC}^n$ be the restriction 
of the adjoint quotient map to the Slodowy slice for $x_1 \in O_{[2n-i,i]} \subset \Lieso_{2n}$ with
$i$ odd or $i = n$, and let $\varphi_{S_2}: S_2 \to {\IC}^{n-1}$ be the restriction of the adjoint quotient 
map to the Slodowy slice for $x_2 \in O_{[2n-i-1,i-1]} \subset \Liesp_{2n-2}$. Then there are a 
hyperplane $L$ of ${\IC}^n$ and a commutative diagram of germs of complex-analytic spaces 
\begin{equation}\label{eq:TwoSlicesCD} 
\begin{CD} 
(\varphi_{S_1}^{-1}(L), x_1) @>>> (S_2, x_2) \\ 
@VVV @VVV \\ 
(L,0) @>>> ({\IC}^{n-1},0)     
\end{CD} 
\end{equation}
where the horizontal map on the first row is an isomorphism preserving the Poisson brackets up to 
a reversal of sign. In particular, the universal Poisson deformation of $(S_{2,0}, x_2)$ is realized as 
a Slodowy slice in $\Lieso_{2n}$ with the reversed Poisson structure. Here $S_{2,0}$ is the 
central fibre of $\varphi_{S_2}$.    
\end{proposition}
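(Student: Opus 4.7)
The plan is to recast Kraft--Procesi's construction \cite{K-P} in the Poisson framework via Weinstein's dual pairs \cite{W}. Let $V=\IC^{2n-2}$ be equipped with a non-degenerate symplectic form and $W=\IC^{2n}$ with a non-degenerate symmetric bilinear form. The space $M=\Hom(V,W)$ carries a natural symplectic structure, and the commuting actions of $\LieSp(V)$ and $\LieSO(W)$ are Hamiltonian, with moment maps $\mu_{\mathrm{sp}}:M\to\Liesp(V)$, $A\mapsto A^\sharp A$, and $\mu_{\mathrm{so}}:M\to\Lieso(W)$, $A\mapsto AA^\sharp$, where $A^\sharp$ denotes the adjoint with respect to the two forms. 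These form a Weinstein dual pair: the subalgebras $\mu_{\mathrm{sp}}^*\IC[\Liesp(V)]$ and $\mu_{\mathrm{so}}^*\IC[\Lieso(W)]$ are mutual Poisson centralisers in $\IC[M]$, and the symplectic reductions obtained from one moment map carry Poisson structures agreeing, up to a sign, with those inherited via the other.

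First I would invoke Kraft--Procesi's analysis to check that, for the specific Jordan types $[2n-i,i]$ and $[2n-i-1,i-1]$ in the hypothesis (with $i$ odd or $i=n$), the fibre $\mu_{\mathrm{so}}^{-1}(x_1)$ is a single $\LieSp(V)$-orbit whose image under $\mu_{\mathrm{sp}}$ is the $\LieSp(V)$-orbit of $x_2$. I would then extend this fibrewise matching to the full Slodowy slices: restricted over a germ neighbourhood of the fibre above $x_1$, the preimage $\mu_{\mathrm{so}}^{-1}(\SS_1)$, after quotienting by the residual group action from the dual pair, yields a germ isomorphic to $(\SS_2,x_2)$, but only after the base is cut down to a hyperplane $L\subset\IC^n$. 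The hyperplane is forced by the Casimir identity $\det(tI-AA^\sharp)=t^{2}\det(tI-A^\sharp A)$: the pullbacks of the centres $\IC[\Lieso(W)]^{\LieSO(W)}$ and $\IC[\Liesp(V)]^{\LieSp(V)}$ through the two moment maps agree modulo exactly one additional generator of $\IC[\Lieso(W)]^{\LieSO(W)}$ (a Pfaffian-type invariant corresponding to the factor $t^{2}$), and $L$ is the zero-locus of this unmatched Casimir.

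The sign reversal of the Poisson bracket is built into the dual-pair formalism: the Poisson structure induced on the reduced space via one moment map is the negative of the Kirillov--Kostant--Souriau bracket pulled back via the other, and this matches the Gan--Ginzburg description \cite{GanGinzburg} of the Poisson structure on Slodowy slices as a Hamiltonian reduction from the ambient Lie algebra. The main obstacle I anticipate is not this formal step but rather the explicit verification, in the spirit of \cite{K-P}, that the germ of $\mu_{\mathrm{so}}^{-1}(\SS_1)$ near $\mu_{\mathrm{so}}^{-1}(x_1)$ is smooth, that the quotient is well-defined, and that the resulting map to $\SS_2$ is an isomorphism onto the germ at $x_2$ precisely after restricting to $L$. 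This amounts to a transversality and dimension computation that makes essential use of the Jordan-type restriction ($i$ odd or $i=n$) in the hypothesis; once it is in place, the commutative diagram \eqref{eq:TwoSlicesCD} and the Poisson sign statement follow automatically.
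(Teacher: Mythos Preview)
Your proposal is correct and follows essentially the same approach as the paper: both recast the Kraft--Procesi construction on $\Hom(\IC^{2n},\IC^{2n-2})$ as a Weinstein dual pair, identify the hyperplane $L$ via the Pfaffian (your Casimir identity $\det(tI-AA^\sharp)=t^2\det(tI-A^\sharp A)$ makes this explicit), and use the dual-pair formalism to obtain the anti-isomorphism of Poisson structures. The only notable difference is that where you anticipate an explicit smoothness and quotient verification for $\mu_{\mathrm{so}}^{-1}(\SS_1)$, the paper bypasses this by invoking Weinstein's Theorem 8.1 directly: in a full dual pair the transverse Poisson structures at corresponding points are automatically anti-isomorphic, so once Kraft--Procesi \cite[13.5]{K-P} supplies an $X_0$ with the correct images and smoothness of both moment maps there, the comparison of Slodowy-slice germs is immediate.
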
        

\begin{proof} Let $V$ be a $2n$-dimensional complex vector space with a non-degenerate symmetric 
form $(\:, \:)_V$ and let $U$ be a $2n-2$-dimensional complex vector space with a non-degenerate
skew-symmetric form $(\:, \:)_U$. For an element $X\in\mathrm{Hom}(V,U)$, let $X^*\in\mathrm{Hom}(U,V)$ 
be its adjoint, characterised by $(Xv,u)_U = (v, X^*u)_V$. According to Kraft and Procesi \cite{K-P}, 
define maps 
$$\pi: \mathrm{Hom}(V,U) \to \Liesp(U)\quad\text{and}\quad\rho: \mathrm{Hom}(V,U) \to \Lieso(V)$$ 
by $\pi(X) := XX^*$ and $\rho(X) := X^*X$. Note that $(A, B) \in \LieSO(V) \times \LieSp(U)$ acts 
on $X \in \mathrm{Hom}(V,U)$ by $(A,B)X := B X A^{-1}$. On the other hand, $\Lieso(V)$ and $\Liesp(U)$ 
have adjoint actions of $\LieSO(V)$ and $\LieSp(U)$, respectively, and the maps $\pi$ and $\rho$ are 
$\LieSp(U)$-equivariant and $\LieSO(V)$-equivariant, respectively. Let
$\mathrm{Hom}'(V,U)\subset\mathrm{Hom}(V,U)$ be the open subset consisting of surjective linear maps,
and let $D \subset \Lieso(V)$ be the determinantal variety consisting of the endomorphisms with rank 
$\le 2n-2$. More precisely, $D$ is cut out by the vanishing of the pfaffian. By (\cite{K-P}, Theorem 1.2), 
we have 
$$\mathrm{Im}(\rho) = D.$$
Restricting $\pi$ and $\rho$ to $\mathrm{Hom}'(V,U)$ we get a diagram $$ D \stackrel{\rho'}
\longleftarrow \mathrm{Hom}'(V,U) \stackrel{\pi'}\longrightarrow sp(U).$$
Kraft and Procesi observed in (\cite{K-P}, 13.5) that one can find an element 
$X_0 \in \mathrm{Hom}'(V,U)$ so that $\pi'(X_0) \in O_{[2n-i-1,i-1]}$ and $\rho'(X_0) \in O_{[2n-i,i]}$, 
and so that both $\pi'$ and $\rho'$ are smooth at $X_0$.

Let us recall here the notion of a {\em dual pair} introduced by Weinstein \cite{W} for 
$C^{\infty}$-manifolds. Here we consider the analogous notion in the complex-analytic setting.  
A dual pair is a diagram 
$$ P_1 \stackrel{j_1}\longleftarrow P \stackrel{j_2}\longrightarrow P_2$$ 
with $P$ a holomorphic symplectic manifold, and $P_i$, $i = 1,2$, Poisson manifolds 
such that both $j_1$ and $j_2$ are Poisson mappings and $j_i^{-1}\mathcal{O}_{P_i}$ for $i = 1,2$ 
are mutual centralizers with respect to $\{\:, \:\}_P$. If $j_1$ and $j_2$ are both smooth morphisms, 
it is called a full dual pair. For instance, when a complex Lie group $G$ acts freely on $P$ preserving the
symplectic form and provided its moment map $\mu$ exists, the diagram 
$$ P/G \longleftarrow P \stackrel{\mu}\longrightarrow \g^* $$ is a dual pair (cf. \cite{W}, \S 8).  
Weinstein observed in (\cite{W}, Theorem 8.1) that if 
$$ P_1 \stackrel{j_1}\longleftarrow P \stackrel{j_2}\longrightarrow P_2$$ 
is a full dual pair, then, for any point $x \in P$, the transverse Poisson structures on $P_1$ and 
$P_2$ at $j_1(x)$ and $j_2(x)$ are anti-isomorphic as Poisson manifolds. In the remainder, we will
apply this result to the situation above. 

We define a symplectic 2-form $\omega$ on $\mathrm{Hom}(V,U)$ by $\omega(X,Y):=2tr(XY^*)$.
Then $\LieSp(U)$ and $\LieSO(V)$ naturally act on $\mathrm{Hom}(V,U)$ preserving $\omega$,
and $\pi$ and $\rho$ are the moment maps for these actions under the identifications
of $\Liesp(U)$ with $\Liesp(U)^*$ and of $\Lieso(V)$ with $\Lieso(V)^*$ by the trace maps. 
By  \cite[Proposition 11.1] {K-P}, $\LieSp(U)$ acts freely on $\mathrm{Hom}'(V,U)$ and 
$\rho'$ factorizes as  
$$\mathrm{Hom}'(V,U) \to \mathrm{Hom}'(V,U)/\LieSp(U) \subset D,$$
where $\mathrm{Hom}'(V,U)/\LieSp(U)$ is an open subset of $D$. Now let us consider the adjoint 
quotient map $\varphi: \Lieso(V) \to {\IC}^n$. 
One of the components of $\varphi$ is the pfaffian $\pf$, a square root of the determinant and hence 
an invariant polynomial of weight $n$. Define a hyperplane $L\subset {\IC}^n$ by the equation 
$\pf = 0$. Then we can write $D = \varphi^{-1}(L)$. By this description, we see that the standard 
Poisson structure of $\Lieso(V)$ restricts to give a Poisson structure on $D$. As an open set, $\mathrm{Hom}'(V,U)/\LieSp(U)$ also inherits a Poisson structure. Since $\rho$ is the moment map 
for the $\LieSO(V)$-action, this Poisson structure coincides with the natural Poisson structure 
induced by the quotient map $\mathrm{Hom}'(V,U) \to \mathrm{Hom}'(V,U)/\LieSp(U)$. This implies that 
the diagram 
$$D \stackrel{\rho'}\longleftarrow \mathrm{Hom}'(V,U) \stackrel{\pi'}\longrightarrow \Liesp(U)$$
is a full dual pair. The symplectic leaf of $\Liesp(U)$ passing through $\pi'(X_0)$ is the nilpotent 
orbit $O_{[2n-i-1,i-1]}$. Similarly, the symplectic leaf of $D$ passing through $\rho'(X_0)$ is the 
nilpotent orbit $O_{[2n-i,i]}$. We then see that  the transverse Poisson structure on $D$ at $\rho'(X_0)$ 
and the transverse Poisson structure on $\Liesp(U)$ at $\pi'(X_0)$ are anti-isomorphic by 
\cite[ Theorem 8.1]{W}.  By the $\LieSp(U)$-action and the $\LieSO(V)$-action, we may assume that 
$x_1 = \rho'(X_0)$ and $x_2 = \pi'(X_0)$. Therefore, there is an anti-isomorphism 
$(S_1\cap D,x_1)\cong (S_2, x_2)$ of Poisson structures.   

Let $(L,0)$ be the germ of $L$ at the origin. Restricting the adjoint quotient map $\varphi$ to 
$S_1 \cap D$,  we get a map $(S_1 \cap D, x_1) \to (L,0)$. On the other hand, let 
$\varphi':\Liesp(U) \to {\IC}^{n-1}$ be the adjoint quotient map for $\Liesp(U)$. Restricting 
$\varphi'$ to $S_2$, we get a map $(S_2, x_2) \to ({\IC}^{n-1},0)$. Since the isomorphism between 
$(S_1 \cap D, x_1)$ and $(S_2, x_2)$ preserves symplectic leaves, we finally have a commutative 
diagram 
\begin{equation}\label{eq:TwoSlices2CD} 
\begin{CD} 
(S_1 \cap D, x_1) @>>> (S_2, x_2) \\ 
@VVV @VVV \\ 
(L,0) @>>> ({\IC}^{n-1},0).     
\end{CD} 
\end{equation}
\end{proof}


\end{document}